\documentclass{amsart}

% Packages and options
\usepackage{amsmath,
            amssymb,
            bibentry,
            enumitem,
            graphicx,
            mathrsfs,
            tabu,
            xypic}
\usepackage[comma,numbers,sort,square]{natbib}
\usepackage[colorlinks=true,
            linktocpage=true,
            linkcolor=magenta,
            citecolor=magenta,
            urlcolor=magenta]{hyperref}
\PassOptionsToPackage{hyphens}{url}

% Theorem styles
\newtheorem{theorem}{Theorem}[section]
\newtheorem{proposition}[theorem]{Proposition}
\newtheorem{lemma}[theorem]{Lemma}
\newtheorem{corollary}[theorem]{Corollary}
\theoremstyle{definition}
\newtheorem{definition}[theorem]{Definition}

\newtheoremstyle{dotless}{}{}{\upshape}{}{\bfseries}{}{ }{}
\theoremstyle{dotless}

\newenvironment{highlight}[1]
  {\innerhighlight}
  {\endinnerhighlight}

% Common symbols

\DeclareMathOperator{\res}{\upharpoonright}
\newcommand{\ran}{\operatorname{ran}}

\newcommand{\height}{\operatorname{ht}}

% Lists
\newcommand{\seq}[1]{\langle #1 \rangle}

% Logic

% Computability theory

% Model theory

% Systems and subsystems

\newcommand{\RCA}{\mathsf{RCA}}

% Induction and bounding

% Combinatorial principles
\newcommand{\RT}{\mathsf{RT}}
\newcommand{\COH}{\mathsf{COH}}
\newcommand{\CAC}{\mathsf{CAC}}
\newcommand{\ADS}{\mathsf{ADS}}
\newcommand{\SRT}{\mathsf{SRT}}
\newcommand{\SCAC}{\mathsf{SCAC}}
\newcommand{\SADS}{\mathsf{SADS}}

\newcommand{\D}{\mathsf{D}}
\newcommand{\ADC}{\mathsf{ADC}}
\newcommand{\SADC}{\mathsf{SADC}}
\newcommand{\GeneralSADS}{\mathsf{General}\text{-}\mathsf{SADS}}
\newcommand{\GeneralSADC}{\mathsf{General}\text{-}\mathsf{SADC}}
\newcommand{\WSCAC}{\mathsf{WSCAC}}

% Strong reductions
\newcommand{\cequiv}{\equiv_{\text{\upshape c}}}
\newcommand{\cred}{\leq_{\text{\upshape c}}}
\newcommand{\ncred}{\nleq_{\text{\upshape c}}}
\newcommand{\scred}{\leq_{\text{\upshape sc}}}

\newcommand{\ured}{\leq_{\text{\upshape W}}}
\newcommand{\nured}{\nleq_{\text{\upshape W}}}
\newcommand{\sured}{\leq_{\text{\upshape sW}}}

% Special

\renewcommand{\small}{\text{\upshape S}}
\renewcommand{\large}{\text{\upshape L}}
\newcommand{\isolated}{\text{\upshape I}}
\newcommand{\FinLin}{\text{\upshape FinLO}}
\newcommand{\FinPO}{\text{\upshape FinPO}}
 
\begin{document}

\title{The uniform content of partial and linear orders}

\author[E.\ P.\ Astor]{Eric P. Astor}
\address{Department of Mathematics\\
University of Connecticut\\
Storrs, Connecticut U.S.A.}
%196 Auditorium Road\\ Storrs, Connecticut 06269 U.S.A.}
%\curraddr{}
\email{eric.astor@uconn.edu}

\author[D.\ D.\ Dzhafarov]{Damir D. Dzhafarov}
\address{Department of Mathematics\\
University of Connecticut\\
Storrs, Connecticut U.S.A.}
%196 Auditorium Road\\ Storrs, Connecticut 06269 U.S.A.}
%\curraddr{}
\email{damir@math.uconn.edu}

\author[R.\ Solomon]{Reed Solomon}
\address{Department of Mathematics\\
University of Connecticut\\
Storrs, Connecticut U.S.A.}
%196 Auditorium Road\\ Storrs, Connecticut 06269 U.S.A.}
%\curraddr{}
\email{david.solomon@uconn.edu}

\author[J.\ Suggs]{Jacob Suggs}
\address{Department of Mathematics\\
University of Connecticut---Waterbury\\
Waterbury, Connecticut U.S.A.}
%196 Auditorium Road\\ Storrs, Connecticut 06269 U.S.A.}
%\curraddr{}
\email{jacob.suggs@uconn.edu}

\thanks{Dzhafarov was partially supported by NSF grant DMS-1400267.}

\maketitle

\begin{abstract}
	The principle $\ADS$ asserts that every linear order on $\omega$ has an infinite ascending or descending sequence. This has been studied extensively in the reverse mathematics literature, beginning with the work of Hirschfeldt and Shore \cite{HS-2007}. We introduce the principle $\ADC$, which asserts that every such linear order has an infinite ascending or descending chain. The two are easily seen to be equivalent over the base system $\RCA_0$ of second order arithmetic; they are even computably equivalent. However, we prove that $\ADC$ is strictly weaker than $\ADS$ under Weihrauch (uniform) reducibility. In fact, we show that even the principle $\SADS$, which is the restriction of $\ADS$ to linear orders of type $\omega + \omega^*$, is not Weihrauch reducible to $\ADC$. In this connection, we define a more natural stable form of $\ADS$ that we call $\GeneralSADS$, which is the restriction of $\ADS$ to linear orders of type $k + \omega$, $\omega + \omega^*$, or $\omega + k$, where $k$ is a finite number. We define $\GeneralSADC$ analogously. We prove that $\GeneralSADC$ is not Weihrauch reducible to $\SADS$, and so in particular, each of $\SADS$ and $\SADC$ is strictly weaker under Weihrauch reducibility than its general version. Finally, we turn to the principle $\CAC$, which asserts that every partial order on $\omega$ has an infinite chain or antichain. This has two previously studied stable variants, $\SCAC$ and $\WSCAC$, which were introduced by Hirschfeldt and Jockusch \cite{HS-2007}, and by Jockusch, Kastermans, Lempp, Lerman, and Solomon \cite{JKLLS-2009}, respectively, and which are known to be equivalent over $\RCA_0$. Here, we show that $\SCAC$ is strictly weaker than $\WSCAC$ under even computable reducibility.
\end{abstract}

\section{Introduction}

In the quest to understand the logic strength of Ramsey's theorem for pairs, initiated by Jockusch \cite{Jockusch-1972}, a myriad of related combinatorial principles were introduced and studied in their own right, giving rise to what is now called the reverse mathematics zoo \cite{Dzhafarov-zoo}. Two early examples, introduced by Hirschfeldt and Shore \cite{HS-2007}, were the {ascending/descending sequence principle} ($\ADS$) and the {chain/antichain principle} ($\CAC$). $\ADS$ asserts that every linear order (on $\omega$) has an infinite ascending or descending sequence, while $\CAC$ asserts that every partial order (on $\omega$) has an infinite chain or antichain. (See Section \ref{sec:bkg} for formal definitions.) While these principles have thus far been analyzed from the point of view of reverse mathematics, in this article we study them using the more nuanced framework of Weihrauch reducibility, which we describe below. We refer the reader to Soare~\cite{Soare-TA} and Simpson~\cite{Simpson-2009} for general background on computability and reverse mathematics, respectively, and to Hirschfeldt~\cite[Sections 6 and 9]{Hirschfeldt-2014} for a comprehensive survey of reverse mathematical results about Ramsey's theorem and other combinatorial problems.

As is well-known, there is a natural interplay between computability theory and reverse mathematics, with each of the benchmark subsystems of second-order arithmetic broadly corresponding to a particular level of computability-theoretic complexity (see, e.g.,~\cite[Section 1]{HS-2007} for details). In fact, this connection is deeper. The majority of principles one considers in reverse mathematics, like Ramsey's theorem, have the syntactic form
\[
	\forall X\, (\Phi(X) \to \exists Y \, \Psi(X,Y)),
\]
where $\Phi$ and $\Psi$ are arithmetical predicates. It is common to call such a principle a \emph{problem}, and to call each $X$ such that $\Phi(X)$ holds an \emph{instance} of this problem, and each $Y$ such that $\Psi(X,Y)$ holds a \emph{solution} to $X$. The instances of $\RT^n_k$ are thus the colorings $c : [\omega]^n \to k$, and the solutions to any such $c$ are the infinite homogeneous sets for this coloring. Over $\RCA_0$, an implication between problems (say $\mathsf{Q} \to \mathsf{P}$) can in principle make multiple applications of the antecedent ($\mathsf{Q}$), or split into cases in a non-uniform way; however, in practice, most implications have a simpler shape. To discuss these, we use the following notions of reduction between problems:

\begin{definition}\label{def:reds}
	Let $\mathsf{P}$ and $\mathsf{Q}$ be problems.	
	\begin{enumerate}
		\item $\mathsf{P}$ is \emph{computably reducible} to $\mathsf{Q}$, written $\mathsf{P} \cred \mathsf{Q}$, if every instance $X$ of $\mathsf{P}$ computes an instance $\widehat{X}$ of $\mathsf{Q}$, such that if $\widehat{Y}$ is any solution to $\widehat{X}$ then there is a solution $Y$ to $X$ computable from $X \oplus \widehat{Y}$.
		\item $\mathsf{P}$ is \emph{strongly computably reducible} to $\mathsf{Q}$, written $\mathsf{P} \scred \mathsf{Q}$, if every instance $X$ of $\mathsf{P}$ computes an instance $\widehat{X}$ of $\mathsf{Q}$, such that if $\widehat{Y}$ is any solution to $\widehat{X}$ then there is a solution $Y$ to $X$ computable from $\widehat{Y}$.
		\item $\mathsf{P}$ is \emph{Weihrauch reducible} to $\mathsf{Q}$, written $\mathsf{P} \ured \mathsf{Q}$, if there are Turing functionals $\Phi$ and $\Delta$ such that if $X$ is any instance of $\mathsf{P}$ then $\Phi^X$ is an instance of $\mathsf{Q}$, and if $\widehat{Y}$ is any solution to $\Phi^X$ then $\Delta^{X \oplus \widehat{Y}}$ is a solution to $X$.
		\item $\mathsf{P}$ is \emph{strongly Weihrauch reducible} to $\mathsf{Q}$, written $\mathsf{P} \sured \mathsf{Q}$, if there are Turing functionals $\Phi$ and $\Delta$ such that if $X$ is any instance of $\mathsf{P}$ then $\Phi^X$ is an instance of $\mathsf{Q}$, and if $\widehat{Y}$ is any solution to $\Phi^X$ then $\Delta^{\widehat{Y}}$ is a solution to~$X$.
	\end{enumerate}
\end{definition}
\noindent All of these reductions express the idea of taking a problem, $\mathsf{P}$, and computably (even uniformly computably, in the case of $\ured$ and $\sured$) transforming it into another problem, $\mathsf{Q}$, in such a way that being able to solve the latter computably (uniformly computably) tells us how to solve the former. This is a natural idea, and indeed, more often than not an implication $\mathsf{Q} \to \mathsf{P}$ over $\RCA_0$ (or at least, over $\omega$-models of $\RCA_0$) is a formalization of some such reduction. The strong versions above may appear more contrived, since it does not seem reasonable to deliberately bar access to the instance of the problem one is working with. Yet commonly, in a reduction of the above sort, the ``backward'' computation from $\widehat{Y}$ to $Y$ turns out not to reference the original instance. Frequently, it is just the identity.

Let $\mathsf{P} \leq_\omega \mathsf{Q}$ denote that every $\omega$-model of $\mathsf{Q}$ is a model of $\mathsf{P}$. It is easy to see that the following implications hold:
\[
\xymatrix{
& \mathsf{P} \ured \mathsf{Q} \ar@2[dr]\\
\mathsf{P} \sured \mathsf{Q} \ar@2[ur] \ar@2[dr] & & \mathsf{P} \cred \mathsf{Q} \ar@2[r] & \mathsf{P} \leq_\omega \mathsf{Q}.\\
& \mathsf{P} \scred \mathsf{Q} \ar@2[ur]
}
\]
No additional arrows can be added to this diagram (see~\cite[Section 1]{HJ-TA}). The notions of computable reducibility and strong computable reducibility were implicitly used in many papers on reverse mathematics, but were first isolated and studied for their own sake by Dzhafarov~\cite{Dzhafarov-2015}, and also form the basis of the iterated forcing constructions of Lerman, Solomon, and Towsner \cite{LST-2013}. Weihrauch reducibility (also called \emph{uniform reducibility}) and strong Weihrauch reducibility were introduced by Weihrauch~\cite{Weihrauch-1992}, under a different formulation than given above, and have been widely applied in the study of computable analysis. Later, these were independently rediscovered by Dorais, Dzhafarov, Hirst, Mileti, and Shafer~\cite{DDHMS-2016}, and shown to be the uniform versions of computable reducibility and strong computable reducibility, respectively (see~\cite[Appendix A]{DDHMS-2016}).

The investigation of these notions has seen a recent surge of interest, as evidenced, e.g., by \cite{BR-TA}, \cite{Dzhafarov-2015}, \cite{Dzhafarov-TA}, \cite{DPSW-TA}, \cite{FP-TA}, \cite{GHM-2015}, \cite{HJ-TA}, \cite{Patey-TA}. (A complete and updated bibliography is maintained by Brattka~\cite{Brattka-bib}.) Collectively, they provide a way of refining the analyses of effective and reverse mathematics, by elucidating subtler points of similarity and difference between various principles. In this paper, we apply this analysis to the above-mentioned principles $\ADS$, $\CAC$, and their variants. Specifically, we examine two natural formulations of the principle $\ADS$, which are equivalent from the classical viewpoint of reverse mathematics, but which we show to be different under Weihrauch reducibility. We then look at the so-called stable version of $\ADS$, first formulated by Hirschfeldt and Shore \cite{HS-2007}, and discover an overlooked form of this principle which is again classically equivalent, but different in the present setting. We conclude by examining two stable versions of $\CAC$, one formulated by Hirschfeldt and Shore, the other by Jockusch, Kastermans, Lempp, Lerman, and Solomon \cite{JKLLS-2009}, and show that, while these are known to be equivalent over $\RCA_0$, they are actually not equivalent under even computable reducibility.

The paper is structured as follows. In Section \ref{sec:bkg}, we formally define the principles we will be concerned with below, and discuss the basic relationships that hold between them. In Section \ref{sec:ads}, we prove our non-equivalence results about $\ADS$ and its stable variants. And in Section \ref{sec:cac}, we do the same for the two stable versions of $\CAC$. Our results are expressed in Figure \ref{fig:summary}, which appears in the next section.

\section{Background}\label{sec:bkg}

Throughout, we reserve $\leq$ for the natural ordering on $\omega$. All sets are assumed to be subsets of $\omega$, and all partial and linear orders are assumed to have field $\omega$ unless otherwise specified. As usual, if $\leq_P$ is a partial order, we write $x <_P y$ if $x \leq_P y$ and $x \neq y$.

\begin{definition}
	Let $\leq_L$ be a linear order.
	\begin{enumerate}
		\item An \emph{ascending sequence} for $\leq_L$ is a set $S \subseteq L$ such that for all $x,y \in S$, if $x \leq y$ then $x \leq_L y$.
		\item A \emph{descending sequence} for $\leq_L$ is a set $S \subseteq L$ such that for all $x,y \in S$, if $x \leq y$ then $y \leq_L x$.
		\item An \emph{ascending chain} for $\leq_L$ is a set $S \subseteq L$ such that for every $x \in S$ there are only finitely many $y \in S$ with $y \leq_L x$.
		\item A \emph{descending chain} for $\leq_L$ is a set $S \subseteq L$ such that for every $x \in S$ there are only finitely many $y \in S$ with $x \leq_L y$.
	\end{enumerate}
\end{definition}

The principle $\ADS$ below was formulated by Hirschfeldt and Shore~\cite[Sections 2 and 3]{HS-2007}. We formulate the analogues principle $\ADC$, which changes the formulation from sequences to chains.

\begin{highlight}{Ascending/descending sequence principle ($\ADS$)}
	Every linear order has an infinite ascending or descending sequence.
\end{highlight}

\begin{highlight}{Ascending/descending chain principle ($\ADC$)}
	Every linear order has an infinite ascending chain or descending chain.
\end{highlight}

Computably, there is no difference between these two principles, as we now show. The proof is straightforward, but we go through it carefully to highlight some of its features.

\begin{proposition}\label{prop:basic}
	\
	\begin{enumerate}
		\item $\ADC \sured \ADS$.
		\item $\ADS \cred \ADC$. In particular, $\ADS \cequiv \ADC$.
	\end{enumerate}
\end{proposition}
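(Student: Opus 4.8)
For part~(1), the plan is to verify that the \emph{identity} functionals witness $\ADC \sured \ADS$. A linear order is literally the same object whether we regard it as an instance of $\ADC$ or of $\ADS$, so the forward functional $\Phi$ can simply return its input. For the backward functional $\Delta$, I would observe that an infinite ascending sequence for a linear order $\leq_L$ is automatically an infinite ascending chain for $\leq_L$: if $S$ is an ascending sequence and $x \in S$, then any $y \in S$ with $y \leq_L x$ must satisfy $y \leq x$, since $x < y$ would force $x \leq_L y$ and hence $x = y$; thus there are at most $x+1$ such $y$. The descending case is dual, so $\Delta$ can also be the identity.

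For part~(2), I would take the $\ADC$-instance produced from an $\ADS$-instance $L$ to be $L$ itself (every linear order is both); all the work is then in showing that any solution $S$ to this $\ADC$-instance, together with $L$, computes a solution to the original $\ADS$-instance. Such an $S$ is an infinite ascending chain or an infinite descending chain for $\leq_L$, and I would thin it out greedily. Suppose first that $S$ is an infinite ascending chain. Let $t_0$ be the $\leq$-least element of $S$, and, given $t_0 < \cdots < t_k$ with $t_0 <_L \cdots <_L t_k$, let $t_{k+1}$ be the $\leq$-least $y \in S$ with $y > t_k$ and $y >_L t_k$. The key point is that such a $y$ always exists: otherwise every $y \in S$ with $y > t_k$ would satisfy $y \leq_L t_k$, making $\{ y \in S : y \leq_L t_k \}$ infinite, contrary to $S$ being an ascending chain. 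The resulting set $T = \{ t_0 < t_1 < \cdots \}$ is infinite and $\leq_L$-increasing, hence an ascending sequence, and it is computable from $L \oplus S$. If instead $S$ is an infinite descending chain, the symmetric construction (searching at each step for $y > t_k$ with $y <_L t_k$) yields an infinite descending sequence. This gives $\ADS \cred \ADC$, and combining it with part~(1) — using that $\sured$ implies $\cred$ — we obtain $\ADS \cequiv \ADC$.

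The only real subtlety, and the feature worth highlighting, is that the backward reduction in part~(2) must branch according to whether the given chain is ascending or descending; the two cases are handled by two genuinely different Turing functionals, and there is no uniform way to decide which applies. Hence the argument yields only $\cred$, not $\ured$ — and the results of Section~\ref{sec:ads} will show this is unavoidable, since in fact $\ADS \nured \ADC$. Beyond this, the one thing to check carefully is the non-stuckness claim in the greedy construction, which is a short pigeonhole argument straight from the definition of ascending/descending chain.
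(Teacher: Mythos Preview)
Your proposal is correct and follows essentially the same approach as the paper: identity functionals for part~(1), and for part~(2) the same greedy thinning of an ascending (or descending) chain into a sequence using $\leq_L \oplus S$, with the same observation that the branching on ``ascending vs.\ descending'' is the sole obstruction to uniformity. Your write-up in fact supplies a bit more detail than the paper's (the explicit pigeonhole bound in part~(1) and the explicit non-stuckness argument in part~(2)), but the underlying ideas are identical.
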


\begin{proof}
Clearly, every ascending sequence is an ascending chain, and every descending sequence is a descending chain. Hence, $\ADC \sured \ADS$, just via the identity functionals. This proves part 1. For part 2, fix an instance $\leq_L$ of $\ADS$. Let $S$ be any solution to $\leq_L$ as an instance of $\ADC$, say an infinite ascending chain. Then for every $x \in S$, almost all $y \in S$ satisfy $x <_L y$. Since $S$ is infinite, this means that for each $x \in S$ there is a $y \in S$ with $x <_L y$, and obviously, such a $y$ can be found $\leq_L \oplus S$-computably, uniformly in $x$. Iterating this procedure, $\leq_L \oplus S$ can computably thin $S$ out to an infinite ascending sequence $S' \subseteq S$ for $\leq_L$. Similarly, if $S$ is a descending chain, then $\leq_L \oplus S$ can computably thin $S$ to an infinite descending sequence for $\leq_L$. We conclude that $\ADS \cred \ADC$, via the identity functional in the forward direction, and the appropriate thinning procedure in the backward direction.
\end{proof}

Note that the reduction of $\ADS$ to $\ADC$ above is uniform modulo a single bit of information, namely, whether the $\ADC$-solution $S$ is an ascending chain or a descending chain. Otherwise, the reduction does not depend on the solution in any way. Thus, there are actually two fixed procedures such that from any $\ADC$-solution $S$, one or the other can be used to compute an $\ADS$-solution. This suggests that $\ADS$ is \emph{almost} uniformly reducible to $\ADC$. We show that this cannot be improved: $\ADS \nured \ADC$. Nonetheless, many of the results from \cite{HS-2007} work just as well whether we are working with sequences or chains, with some notable exceptions that we explore below.

Given a linear order $\leq_L$, we say $x$ is \emph{small} (in $\leq_L$), or $\leq_L$-small, if there are only finitely many $y$ with $y <_L x$, and we say $x$ is \emph{large} (in $\leq_L$), or $\leq_L$-large, if there are only finitely many $y$ with $x <_L y$.

\begin{definition}
	A linear order $\leq_L$ is \emph{stable} if every $x$ is either small or large.
\end{definition}

\noindent Note that every stable linear order has type $\omega + k$, $k + \omega^*$, or $\omega + \omega^*$, for some $k \in \omega$, depending as there are only finitely many large elements, only finitely many small elements, or infinitely many of each kind. Hirschfeldt and Shore \cite[Section 2]{HS-2007} formulated a version of $\ADS$ for orders of type $\omega + \omega^*$.

\begin{highlight}{Stable ascending/descending sequence principle ($\SADS$)}
	Every stable linear order $\leq_L$ with infinitely many small and large elements has an infinite ascending or descending sequence.
\end{highlight}

\noindent We define an analogous version of $\ADC$.

\begin{highlight}{Stable ascending/descending chain principle ($\SADC$)}
	Every stable linear order $\leq_L$ with infinitely many small and large elements has an infinite ascending chain or descending chain.
\end{highlight}

\noindent One would expect ``stable versions'' of $\ADS$ and $\ADC$ to be formulated for all stable linear orders, rather than just those of type $\omega + \omega^*$. However, it is easy to see that every computable linear order of type $\omega + k$ or $k + \omega^*$ for some finite $k$ has a computable solution (a computable infinite ascending or descending sequence, respectively). Thus, in the traditional framework of reverse mathematics, the restriction to orders of type $\omega + \omega^*$ is inconsequential. We can expect this not to be the case under uniform reducibility, and so formulate the following general versions of $\SADS$ and $\SADC$.

\begin{highlight}{Generalized $\SADS$ ($\GeneralSADS$)}
	Every stable linear order $\leq_L$ has an infinite ascending or descending sequence.
\end{highlight}

\begin{highlight}{Generalized $\SADC$ ($\GeneralSADC$)}
	Every stable linear order $\leq_L$ has an infinite ascending chain or descending chain.
\end{highlight}

Related to the above principles are the following well-known versions of Ramsey's theorem. For a set $X$, let $[X]^2$ denote the set of all ordered pairs $\seq{x,y} \in X^2$ with $x < y$. For $k \geq 1$, a \emph{stable $k$-coloring of pairs} is a map $c : [\omega]^2 \to k = \{0,\ldots,k-1\}$ with the property that $\lim_y c(x,y)$ exists for each $x$, which means there is a $j < k$ such that $c(x,y) = j$ for all sufficiently large $y$. Here and throughout, we write $c(x,y)$ in place of $c(\seq{x,y})$. A set $H$ is \emph{homogeneous} for $c$ if there is a $j < k$ such that $c(x,y) = j$ for all $\seq{x,y} \in [H]^2$, in which case we also say $H$ is homogeneous \emph{with color $j$}. A set $L$ is \emph{limit homogeneous} for $c$ if there is a $j < k$ such that $\lim_y c(x,y) = j$ for all $x \in H$, in which case we also say $L$ is limit homogeneous \emph{with color $j$}. Note that every infinite homogeneous set is limit homogeneous, with the same color $j$.

\begin{highlight}{Stable Ramsey's theorem for pairs ($\SRT^2$)}
	For every $k \geq 1$, every stable $k$-coloring of pairs has an infinite homogeneous set.
\end{highlight}

\begin{highlight}{$\Delta^0_2$ principle ($\D^2$)}
	For every $k \geq 1$, every stable $k$-coloring of pairs has an infinite limit homogeneous set.
\end{highlight}

\noindent The above principles were defined by Cholak, Jockusch, and Slaman \cite[Section 7]{CJS-2001}, and shown to be equivalent over $\RCA_0$ by Chong, Lempp, and Yang \cite[Theorem 1.4]{CLY-2010}. It is easy to check that the two are computable equivalent, and obviously, we even have $\D^2 \sured \SRT^2$. By contrast, Dzhafarov \cite[Corollary 3.3]{Dzhafarov-TA} has shown that $\SRT^2 \nured \D^2$.

The following proposition lists the basic relationships between all the above principles.

\begin{proposition}\label{prop:stablebasic}
	\
	\begin{enumerate}
		\item $\GeneralSADS \sured \SRT^2$ and $\GeneralSADC \sured \D^2$.
		\item $\GeneralSADS \sured \ADS$ and $\GeneralSADC \sured \ADC$.
		\item $\SADS \sured \GeneralSADS$ and $\SADC \sured \GeneralSADC$.
		\item $\SADC \sured \SADS$ and $\GeneralSADC \sured \GeneralSADS$.
		\item $\GeneralSADS \cred \SADC$.\\In particular, $\GeneralSADC \cequiv \GeneralSADS \cequiv \SADC \cequiv \SADS$.
	\end{enumerate}	
\end{proposition}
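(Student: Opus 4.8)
The plan is to observe that parts 1--4 are witnessed by essentially trivial reductions, to locate the real content in part 5, and to obtain the concluding ``in particular'' by composing reductions around a cycle.

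For parts 2, 3, and 4, in each asserted reduction the instances of the source problem form a literal subclass of the instances of the target, and every solution of the target already is a solution of the source: an ascending (descending) sequence is in particular an ascending (descending) chain, a stable linear order of type $\omega+\omega^*$ is in particular a stable linear order, and a stable linear order is in particular a linear order. So each of these holds via the identity functionals, i.e.\ even as a strong Weihrauch reduction. For part 1, send a stable linear order $\leq_L$ to the coloring $c$ given, for $x<y$, by $c(x,y)=0$ if $x<_L y$ and $c(x,y)=1$ otherwise; stability of $\leq_L$ makes $\lim_y c(x,y)$ exist and equal $0$ precisely when $x$ is small, so $c$ is a stable coloring. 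An infinite set homogeneous for $c$ with color $0$ (resp.\ $1$) is an ascending (resp.\ descending) sequence for $\leq_L$, while an infinite set limit homogeneous with color $0$ (resp.\ $1$) consists entirely of small (resp.\ large) elements and hence is an ascending (resp.\ descending) chain; in both cases the backward functional is the identity, giving $\GeneralSADS\sured\SRT^2$ and $\GeneralSADC\sured\D^2$.

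The work is in part 5, $\GeneralSADS\cred\SADC$, where I would exploit the non-uniformity of $\cred$ and split on the order type of the instance $\leq_L$. If $\leq_L$ has type $\omega+\omega^*$, it is already an instance of $\SADC$: take it to be its own image, and from $\leq_L\oplus S$ with $S$ any ascending or descending chain for $\leq_L$, thin $S$ to an ascending or descending sequence exactly as in the proof of Proposition~\ref{prop:basic}(2). If $\leq_L$ has type $\omega+k$, then $\leq_L$ is a $\leq_L$-computable linear order of type $\omega+k$, so by the relativization of the fact noted above (deleting the finitely many large elements leaves an order of type $\omega$, in which a greedy construction repeatedly picking the least natural number that is above the last choice in both orders succeeds) it has a $\leq_L$-computable infinite ascending sequence $S_0$; send $\leq_L$ to any fixed computable linear order of type $\omega+\omega^*$, and let the backward computation discard its solution and return $S_0$. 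The case $k+\omega^*$ is dual, with $S_0$ a $\leq_L$-computable infinite descending sequence. In every case the image is a $\leq_L$-computable instance of $\SADC$, and from $\leq_L$ together with any solution we $\leq_L$-computably recover a solution of $\leq_L$ as a $\GeneralSADS$ instance, so $\GeneralSADS\cred\SADC$.

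For the concluding equivalences, parts 3 and 4 give $\SADS\sured\GeneralSADS$, $\SADC\sured\GeneralSADC$, $\SADC\sured\SADS$, and $\GeneralSADC\sured\GeneralSADS$, hence the corresponding computable reductions; combined with part 5 these close the cycles $\SADS\cred\GeneralSADS\cred\SADC\cred\SADS$ and $\SADC\cred\GeneralSADC\cred\GeneralSADS\cred\SADC$, so all four principles are computably equivalent. The only non-formal ingredient is the $\omega+k$ (and dually $k+\omega^*$) case of part 5, and it rests entirely on the already-cited observation that a computable linear order of type $\omega+k$ has a computable infinite ascending sequence; I anticipate no real obstacle.
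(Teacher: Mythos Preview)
Your proposal is correct and follows essentially the same approach as the paper. Your treatment of part 5 is in fact more careful than the paper's, which simply refers back to Proposition~\ref{prop:basic}(2) and elides the case split on order type; you rightly observe that an instance of $\GeneralSADS$ of type $\omega+k$ or $k+\omega^*$ is not literally an $\SADC$ instance, and handle this by exploiting the non-uniformity of $\cred$ together with the already-noted fact that such orders compute their own solutions.
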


\begin{proof}
	For part 1, fix a stable linear order $\leq_L$. Define a coloring $c : [\omega]^2 \to 2$ as follows:
	\[
	c(x,y) =
	\begin{cases}
		0 \text{ if } y <_L x\\
		1 \text{ if } x <_L y.
	\end{cases}
	\]
	for all $x < y$. By stability of $\leq_L$, it follows that $c$ is a stable coloring. Now it is easy to see that if $H$ is homogeneous for $c$, then it is an ascending or descending sequence for $\leq_L$, and if $L$ is limit homogeneous for $c$, then it is an ascending or descending chain for $\leq_L$. Parts 2 and 3 are obvious. Part 4 is proved just like the first part of Proposition \ref{prop:basic}, while part 5 is proved just like the second part of Proposition \ref{prop:basic}.
\end{proof}

%\begin{figure}
%\[
%\xymatrix{
%& & & \SRT^2 \ar@2[ddlll] \ar@2[r] & \WSCAC \ar@2[d]\\
%& & & \ADS \ar@2[dl] \ar@2[dr] & \SCAC \ar@2[d]\\
%\D^2 \ar@2[drrr] & & \ADC \ar@2[dr] & & \GeneralSADS \ar@2[dl] \ar@2[dr] \\
%& & & \GeneralSADC \ar@2[dr] & & \SADS \ar@2[dl]\\
%& & & & \SADC
%}
%\]
%\caption{Relationships between principles under $\ured$. An arrow from one principle to another indicates that the latter is strongly Weihrauch reducible to the former.}\label{fig:summary}
%\end{figure}

%\begin{figure}
%\[
%\xymatrix{
%& \SRT^2 \ar@2[dr] \ar@2[ddl] \\
%& & \WSCAC \ar@2[d]\\
%\D^2 \ar@2[ddr] & \ADS \ar@2[d] \ar@2[dr] & \SCAC \ar@2[d]\\
%& \ADC \ar@2[d] & \GeneralSADS \ar@2[dl] \ar@2[d] \\
%& \GeneralSADC \ar@2[dr] & \SADS \ar@2[d]\\
% & & \SADC
%}
%\]
%\caption{Relationships between principles under $\ured$. An arrow from one principle to another indicates that the latter is strongly Weihrauch reducible to the former.}\label{fig:summary}
%\end{figure}

%\begin{figure}
%\[
%\xymatrix{
%\SRT^2 \ar@2[rr] \ar@2[dd] & & \WSCAC \ar@2[d]\\
%& \ADS \ar@2[d] \ar@2[dr] & \SCAC \ar@2[d]\\
%\D^2 \ar@2[dr] & \ADC \ar@2[d] & \GeneralSADS \ar@2[d] \ar@2[dl]\\
%& \GeneralSADC \ar@2[dr] & \SADS \ar@2[d]\\
%& & \SADC
%}
%\]
%\caption{Relationships between principles under $\ured$. An arrow from one principle to another indicates that the latter is strongly Weihrauch reducible to the former.}\label{fig:summary}
%\end{figure}

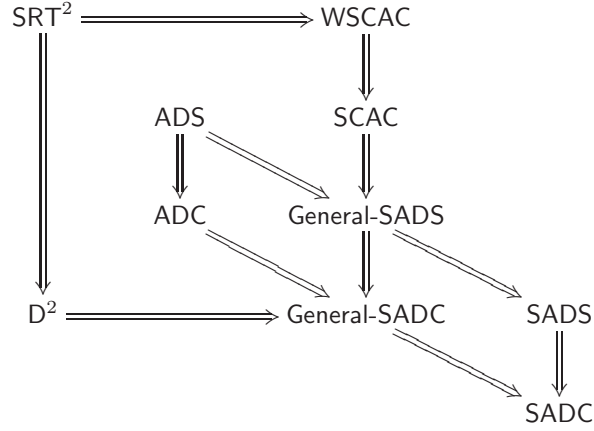
\begin{figure}
\[
\xymatrix{
\SRT^2 \ar@2[rr] \ar@2[ddd]& & \WSCAC \ar@2[d]\\
& \ADS \ar@2[dr] \ar@2[d]& \SCAC \ar@2[d]\\
& \ADC \ar@2[dr] & \GeneralSADS \ar@2[d] \ar@2[dr]\\
\D^2 \ar@2[rr] & & \GeneralSADC \ar@2[dr] & \SADS \ar@2[d]\\
& & & \SADC
}
\]
\caption{Relationships between principles under $\ured$. An arrow from one principle to another indicates that the latter is strongly Weihrauch reducible to the former.}\label{fig:summary}
\end{figure}

We can now state our main results about linear orders.

\begin{theorem}\label{thm:ads}
	$\SADS \nured \ADC$.	
\end{theorem}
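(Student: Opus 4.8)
The plan is to disprove the reduction directly. Fix Turing functionals $\Phi$ and $\Delta$; we build an instance $\leq_L$ of $\SADS$ --- a stable linear order of type $\omega+\omega^*$, with (infinite) small part $A$ and (infinite) large part $B$ --- together with a solution $\widehat Y$ to the $\ADC$-instance $\Phi^{\leq_L}$, such that $\Delta^{\leq_L\oplus\widehat Y}$ is not an ascending or descending sequence for $\leq_L$. Two simple observations do most of the work. First, since $\leq_L$ has type $\omega+\omega^*$, every infinite ascending sequence for $\leq_L$ is contained in $A$ and every infinite descending sequence is contained in $B$ (a set of $\leq_L$-order type $\omega$ cannot meet the $\omega^*$-part, and symmetrically); hence, to defeat $\Delta$ it suffices to force its output to contain at least one element of $A$ and at least one element of $B$, or to be finite. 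Second --- the key enabling observation --- a finite linear order imposes essentially no constraint on which side an element ends up on: any finite linear order $\sigma$ extends to an order of type $\omega+\omega^*$ with both parts infinite in which, of any two $\sigma$-comparable elements, the $\sigma$-smaller one is small and the $\sigma$-larger one is large. So no finite initial segment of $\leq_L$ ever pins down an element's side.

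With these in hand, we run a finite-extension construction. We build $\leq_L$ by finite conditions, leaving elements uncommitted to $A$ or $B$ as long as possible, while simultaneously building $\widehat Y$; along the way we monitor $\Delta$ on the oracle given by the current approximations to $\leq_L$ and to $\widehat Y$, waiting for it to enumerate two distinct elements $n_0 \neq n_1$ into its output. When this happens, each of the two computations has used only a finite initial segment $\sigma$ of $\leq_L$ and finitely many bits of $\widehat Y$; by the enabling observation we may extend $\sigma$ to a genuine $\SADS$-instance $\leq_L$ in which (say) $n_0$ is small and $n_1$ is large. If $\Phi^{\leq_L}$ is not a linear order, it is not a legitimate instance of $\ADC$ and $(\Phi,\Delta)$ already fails on $\leq_L$; otherwise $M := \Phi^{\leq_L}$ is an infinite linear order, which has an infinite ascending or descending chain, and deleting the finitely many positions $\Delta$ queried in the $\widehat Y$-oracle yields an infinite chain $\widehat Y$ for $M$ still consistent with everything $\Delta$ read. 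By continuity of use, $\Delta^{\leq_L\oplus\widehat Y}$ still enumerates $n_0\in A$ and $n_1\in B$, so it is not a solution to $\leq_L$, while $\widehat Y$ is a bona fide solution to $\Phi^{\leq_L}$; this contradicts $\SADS\ured\ADC$ via $(\Phi,\Delta)$.

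The main obstacle --- the technical heart --- is making the two sides of this monitoring cohere: we must guarantee both that $\Delta$ is eventually forced to enumerate two elements that are still uncommitted when it does so, and that $\widehat Y$ can be maintained as a legitimate $\ADC$-solution of the \emph{completely arbitrary} linear order $\Phi^{\leq_L}$ throughout. The second point is handled by interleaving the construction of $\widehat Y$ with that of $\leq_L$: we keep $\widehat Y$ a finite $\Phi^{\leq_L}$-monotone chain and, at stages when we are not acting for $\Delta$, we try to extend it using our freedom to extend $\leq_L$; if we cannot extend it in the current direction, that very obstruction forces $\Phi^{\leq_L}$ to have only boundedly many elements on that side (so that either $\Phi^{\leq_L}$ is not a linear order, and we win, or we may commit to building the chain in the other direction), and a careful bookkeeping keeps this compatible with the finitely many bits of $\widehat Y$ already shown to $\Delta$. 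The first point uses that if $(\Phi,\Delta)$ were a reduction then on the honest completion of any condition $\Delta$ would have to output an infinite ascending or descending sequence, hence two elements; since, by the enabling observation, the finite condition in force when they appear does not determine their sides, we are free to commit them oppositely --- and organizing the search so that these elements have not yet been placed at the moment they are enumerated is exactly the bookkeeping the enabling observation licenses. The remaining ingredients --- that the finitely many side-commitments can always be made to respect the ambient finite order, that both $A$ and $B$ end up infinite (dovetail ``grow $A$'' and ``grow $B$'' requirements), and the case split on whether $\Phi^{\leq_L}$ is a valid $\ADC$-instance at all --- are routine, and assembling them gives $\SADS\nured\ADC$.
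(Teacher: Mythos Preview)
Your high-level strategy is sound and matches the paper's: build an order of type $\omega+\omega^*$, wait for $\Delta$ to output two elements, and then exploit the freedom in the finite stage to place them on opposite sides. Your observation that an infinite ascending (descending) sequence must lie entirely in the small (large) part is exactly right, and your ``enabling observation'' is essentially the paper's Lemma~\ref{lem:ext}.

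The genuine gap is in your construction of $\widehat Y$. When $\Delta$ outputs $n_0,n_1$, it has queried finitely many positions of $\widehat Y$, and some of those queries received the answer \emph{yes}. Call this set $F_+$. Your ``deleting the finitely many positions $\Delta$ queried'' handles only the \emph{no} answers: you can certainly take any infinite chain in $\Phi^{\leq_L}$ and delete finitely many elements. But you must also ensure $F_+\subseteq\widehat Y$, and there is no reason the finite $\Phi^{\leq_L}$-chain $F_+$ extends to an infinite ascending or descending chain in $\Phi^{\leq_L}$ after you have committed $n_0,n_1$ to opposite sides. For a concrete obstruction: if $\Phi^{\leq_L}$ happens to have type $\omega+\omega^*$ and $F_+$ contains both a $\Phi^{\leq_L}$-small and a $\Phi^{\leq_L}$-large element, then no infinite chain of either kind can contain $F_+$. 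Your ``switch direction if stuck'' idea survives one obstruction (stuck above, go below) but not two (stuck above \emph{and} below the committed elements), and you have no control over $\Phi^{\leq_L}$ to rule this out. The phrase ``careful bookkeeping'' is precisely where the proof is missing.

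The paper resolves this with a Seetapun-style argument (Definition~\ref{d:seetapun}, Lemma~\ref{lem:ascdescblobs}, and the case split in the proof). Rather than committing to a single finite $F$ and hoping it extends, one considers \emph{all} finite ``blobs'' $F$ that force $\Psi$ to output two elements in a fixed $\leq_L$-orientation, and organizes them into a tree. Either the tree is finite, in which case its combinatorics guarantee that \emph{some} blob among finitely many candidates must extend to an infinite chain in $\Phi^G$ (this is the two-subcase argument using Proposition~\ref{prop:infsubset} and the assumption that $\Phi^G$ has no $G$-computable chain); or the tree is infinite, yielding an infinite set containing no blob, inside which any chain defeats $\Psi$ outright. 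This dichotomy is the technical heart you are gesturing at but have not supplied.
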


\begin{theorem}\label{thm:d22}
	$\SADS \nured \D^2$.	
\end{theorem}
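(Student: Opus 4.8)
I would prove Theorem~\ref{thm:d22} by a direct diagonalization refuting the existence of a Weihrauch reduction. Suppose for contradiction that Turing functionals $\Phi,\Delta$ witness $\SADS \ured \D^2$. I would construct a computable linear order $\leq_L$ of type $\omega+\omega^*$ --- an instance of $\SADS$ --- together with an infinite set $\widehat Y$, arranging that either $c := \Phi^{\leq_L}$ fails to be a stable coloring of pairs (so it is not a legitimate $\D^2$-instance and the reduction already fails), or $\widehat Y$ is an infinite limit-homogeneous set for $c$ while $\Delta^{\leq_L\oplus\widehat Y}$ is not an infinite ascending or descending sequence for $\leq_L$. In either case $(\Phi,\Delta)$ is not a reduction, a contradiction.

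The target becomes manageable via the following observation about an order of type $\omega+\omega^*$: if $S$ is an infinite ascending sequence, then listing $S$ in increasing order as $a_0<a_1<\cdots$, the definition of ascending sequence together with transitivity gives $a_0<_L a_1<_L\cdots$, an infinite $\leq_L$-increasing chain; since the $\omega^*$-part admits no such chain, every $a_i$ lies in the $\omega$-part, i.e.\ every element of $S$ is $\leq_L$-small. Dually, every element of an infinite descending sequence is $\leq_L$-large. Hence it suffices to arrange that $D:=\Delta^{\leq_L\oplus\widehat Y}$ is either finite or contains at least one small and at least one large element: in the first case $D$ is too small to be a solution, and in the second it is neither an ascending nor a descending sequence.

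I would build $\leq_L$ by the standard movable-marker method: at each stage there is a finite linear order on an initial segment of $\omega$ consisting of an increasing ``frozen small'' block, a ``frozen large'' block, and a reservoir of uncommitted elements between them; new numbers enter the reservoir, and an element leaves it only when the construction freezes it into one of the two blocks. The essential point is that whether a reservoir element ends up small or large is a $\Delta^0_2$ fact not pinned down by the finite amount of $\leq_L$ that $\Delta$ inspects, so I retain the freedom to decide it reactively. I run $\Delta^{\leq_L\oplus\widehat Y}$ against the current approximations and do no freezing at all until $\Delta$ enumerates two distinct elements into $D$; when it does, both are still in the reservoir, and I freeze the $\leq_L$-smaller into the small block and the other into the large block --- consistent with the current order, and permanently fixing one small and one large element of $D$ --- while preserving forever the finite portion of $\leq_L\oplus\widehat Y$ responsible for these outputs. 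Thereafter I resume freezing the remaining reservoir elements, alternating sides, to secure type $\omega+\omega^*$. If $\Delta$ never enumerates two distinct elements, $D$ is finite; either way $D$ is not a solution. Meanwhile I build $\widehat Y$ using a $\emptyset'$ oracle for the construction (only $\leq_L$ itself need be computable): if the $c$ produced is not stable the reduction already fails and nothing more is needed; otherwise I feed into $\widehat Y$ elements all of whose $c$-limit-colors (computed via $\emptyset'$) equal a value attained infinitely often, so that $\widehat Y$ is an infinite limit-homogeneous set for $c$.

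The main obstacle is coordinating these tasks: $c=\Phi^{\leq_L}$, and hence the limit-colors steering $\widehat Y$, depend on freezing decisions not yet made, while the marker machinery securing type $\omega+\omega^*$ must keep clear of the elements held flexible for, or already committed by, the diagonalization. The construction must be set up and verified so that the finite use of $\leq_L\oplus\widehat Y$ frozen for $\Delta$'s outputs is respected for the rest of the run, both blocks grow infinite, $\widehat Y$ ends up an infinite subset of a single (necessarily infinite) limit-color class of $c$, and non-stability of $c$ costs nothing. Since the diagonalization and the limit-color steering each concern only finitely much at any stage, whereas the type requirement always has infinitely many reservoir elements available, these demands do not genuinely conflict; making this precise --- in particular tracking the $\Delta^0_2$ data through the $\emptyset'$ oracle while keeping $\leq_L$ computable --- is where the real work lies. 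I expect this to run closely parallel to the construction behind Theorem~\ref{thm:ads}, with ``infinite limit-homogeneous set for $\Phi^{\leq_L}$'' playing the role of ``infinite ascending or descending chain for $\Phi^{\leq_L}$''.
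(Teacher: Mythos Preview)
Your proposal has a genuine circularity that you flag as ``the main obstacle'' but do not actually resolve. The limit color of any $x$ is $\lim_y \Phi^{\leq_L}(x,y)$, which depends on $\leq_L$ restricted to arbitrarily large initial segments---in particular on freezing decisions you have not yet made and intend to base on $\Delta$'s outputs, which in turn require an initial segment of $\widehat Y$, which requires those very limit colors. If you run a default freezing schedule so that limit colors become defined, you can build $\widehat Y$ and wait for two outputs of $\Delta$; but then deviating from the schedule to place those outputs in opposite blocks changes $\leq_L$ on a tail, hence can change $\Phi^{\leq_L}$ and the limit colors of elements already in $\widehat Y$, so that $\widehat Y$ is no longer limit-homogeneous for the \emph{final} coloring. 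Your assertion that both outputs ``are still in the reservoir'' when they appear is precisely the point that needs argument and does not get one: you have no control over how $\Phi$ reads $\leq_L$, so nothing prevents the limit colors steering $\widehat Y$ from depending on the very elements you want to keep flexible. This is not a bookkeeping detail; it is the entire difficulty.

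The paper breaks this circularity with combinatorics your outline lacks. It fixes a reference order $\leq_L$ of type $\omega+\omega^*$, locates finitely many finite sets $F$ (``blobs'') for which the backward functional applied to $\leq_L\oplus F$ already outputs a diagonalizing pair, and organizes them into a \emph{Seetapun $k$-forest} (Definitions~\ref{d:seetapuntrees} and~\ref{d:forest}) with the key property (Lemma~\ref{lem:monochr}) that for \emph{every} stable $k$-coloring, some blob in the forest is limit-homogeneous. One can then commit to an initial segment of the eventual instance---agreeing with $\leq_L$ through all relevant uses, with every element there declared large---\emph{before} knowing which blob will work for the particular $\Phi^G$ that results; the forest guarantees one does, and that blob then extends to the bad solution. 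Your expectation that the argument runs ``closely parallel'' to Theorem~\ref{thm:ads} is also off: the paper's proof of that theorem already requires the simpler Seetapun tree of Definition~\ref{d:seetapun} rather than a direct marker construction, and Theorem~\ref{thm:d22} needs the iterated $k$-forest precisely to handle an arbitrary number of limit colors.
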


\begin{theorem}\label{thm:sads}
	$\GeneralSADC \nured \SADS$.	
\end{theorem}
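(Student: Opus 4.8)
We want to show there is no pair of Turing functionals $(\Phi, \Delta)$ witnessing a Weihrauch reduction of $\GeneralSADC$ to $\SADS$. The plan is to build an instance of $\GeneralSADC$ — a stable linear order — for which any $\SADS$-instance produced by $\Phi$ has a solution $\widehat Y$ from which $\Delta$ cannot recover a $\GeneralSADC$-solution. The key asymmetry to exploit is that $\GeneralSADC$ has instances of type $k + \omega$ and $\omega + k$ (with $k$ finite), so that an $\SADC$-solution is forced to be essentially "one long tail" — an ascending chain cofinal in the $\omega$ part when the type is $k+\omega$, and in particular the solution must eventually commit to going up (or down) — whereas $\SADS$ only ever hands us solutions inside orders of type $\omega + \omega^*$, where both a genuine ascending sequence and a genuine descending sequence are available, and moreover short initial segments of such a sequence tell the backward functional very little. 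The guiding intuition is the same one that makes $\SRT^2 \nured \D^2$ (Dzhafarov) and Theorem \ref{thm:ads} work: we defeat the reduction by keeping $\Phi^X$'s putative $\SADS$-instance ambiguous between $\omega + \omega^*$-orders whose "small" and "large" sides have been swapped, so that a solution to it does not determine which side of our $\GeneralSADC$-instance is the finite one.

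Concretely, I would run a finite-injury or direct diagonalization against all pairs $(\Phi_e, \Delta_e)$. For the $e$-th requirement, start building a stable linear order $\leq_L$ by declaring a long block of elements to be (tentatively) the finite "$k$" part — say, the first $n_e$ elements, ordered among themselves and placed below everything else — and let the remaining elements so far form an $\omega$-chain on top, so that the current finite approximation looks like an order of type $k + \omega$. Simulate $\Phi_e$ on this order; since $\leq_L$ restricted to what we have built is (a finite piece of) an instance of $\GeneralSADC$, $\Phi_e$ must produce longer and longer initial segments of a linear order $\leq_M$, and if the reduction is correct, $\leq_M$ is a stable order of type $\omega + \omega^*$. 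Wait for $\Delta_e$, on the order $\leq_L$ together with some finite candidate solution $F$ for $\leq_M$ (a finite ascending or descending run in $\leq_M$), to commit to putting some element $x$ into its output $\GeneralSADC$-solution, together with a decision about whether that solution is an ascending chain or a descending chain. Now I spring the trap: extend $\leq_L$ so that the type is actually $\omega + k$ rather than $k + \omega$ — i.e. the block I had tentatively called finite becomes the top finite part and a fresh $\omega$-chain is grown underneath — chosen so that the element $x$ that $\Delta_e$ committed to lies in the "wrong" part, making $\Delta_e$'s chain/descending decision incorrect. The point is that I can keep $F$ (the finite piece of the $\SADS$-solution for $\leq_M$) consistent with an extension of $\leq_M$ that remains a valid $\omega + \omega^*$-order and to which $F$ extends to a full ascending or descending sequence, so that $\widehat Y \supseteq F$ is a legitimate $\SADS$-solution while $\Delta_e^{X \oplus \widehat Y}$ is wrong. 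Different requirements act on disjoint blocks of $\omega$, and each requirement only ever changes the order on its own block finitely often, so the final $\leq_L$ is a well-defined stable linear order.

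The main obstacle — and the step requiring the most care — is arranging that the $\SADS$-instance $\leq_M = \Phi_e^{\leq_L}$ stays "flexible" after the trap is sprung: since $\Phi_e$ may already have queried a large initial segment of $\leq_L$, it may have committed $\leq_M$ to a structure in which the only available $\SADS$-solutions are genuine $\omega$-sequences or genuine $\omega^*$-sequences that do constrain $\Delta_e$. The standard way around this is to grow $\leq_L$'s blocks very slowly relative to the simulation, so that at the moment $\Delta_e$ commits, only finitely much of $\leq_M$ has been determined by $\Phi_e$; we then argue that because $\leq_M$ must be stable of type $\omega + \omega^*$ no matter how $\leq_L$ is completed, the finite piece $F$ of the $\SADS$-solution can be extended along either the small side or the large side of $\leq_M$, and we simply pick the extension that contradicts $\Delta_e$. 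If $\Phi_e$ never produces a legitimate $\SADS$-instance, or $\Delta_e$ never commits, then the reduction already fails for trivial reasons and we can finish that block with any valid completion of $\leq_L$. Handling the bookkeeping so that "only finitely much of $\leq_M$ is determined" can be made precise — likely by working block-by-block and only releasing new elements of $\leq_L$'s block once $\Phi_e$ has asked about all previously released ones — is where essentially all the work lies; the rest is routine.
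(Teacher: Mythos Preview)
Your proposal has the right instinct in one place --- commit $\Delta$ to an element $x$ via a finite piece $F$ of a putative $\SADS$-solution, then complete the $\GeneralSADC$-instance so that $x$ lies in the finite part and hence in no solution --- but the argument you give for why this can always be arranged contains a genuine gap.

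The problem is your flexibility claim. You assert that because $\leq_M = \Phi^{\leq_L}$ has type $\omega + \omega^*$ regardless of how $\leq_L$ is completed, the finite piece $F$ ``can be extended along either the small side or the large side of $\leq_M$, and we simply pick the extension that contradicts $\Delta_e$.'' This is false. If $F$ is, say, a finite $\leq_M$-ascending sequence, then $F$ extends to an infinite ascending sequence for $\leq_M$ if and only if the $\leq_M$-maximum of $F$ is $\leq_M$-small; whether that maximum is small or large is determined by $\Phi$ together with the completed $\leq_L$, not by you. After you spring your trap on $\leq_L$, it may well be that every ascending $F$ on which $\Delta$ has committed has $\leq_M$-large top, and hence extends to no infinite ascending sequence at all --- so you have produced no $\SADS$-solution on which $\Delta$ fails. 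The paper's proof handles exactly this obstruction by a case split your outline lacks. When some ascending $F$ makes $\Psi$ commit \emph{and} its $\leq_{\Phi^G}$-top can be forced small (Case~2), one completes the instance to type $k + \omega^*$ or $\omega + k$ as you suggest, the smallness of the top being precisely what guarantees $F$ extends. When every committing $F$ has $\leq_{\Phi^G}$-large top (Case~3), the argument is entirely different: the tops of an infinite family of such $F$'s give a $G$-computable infinite descending sequence in $\Phi^G$, and genericity of $G$ (Proposition~\ref{prop:nocomp}) then shows this solution computes no chain for $G$. You have no analogue of Case~3, and this is where your sketch breaks.

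Two smaller points. First, for a Weihrauch non-reduction you need only defeat one pair $(\Phi,\Delta)$ at a time, each with its own instance; there is no call for finite injury or ``disjoint blocks,'' and in any case a single linear order on $\omega$ has a single order type, so the block picture does not make sense as stated. Second, your order-type bookkeeping is garbled: $k + \omega$ is just $\omega$, and once the first $n_e$ elements are placed $\leq_L$-below everything else you cannot later make them the \emph{top} finite part. The relevant dichotomy is between $k + \omega^*$ and $\omega + k$, realized by adding all new elements on one fixed side of what has been built.
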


Notice that, for the purposes of studying the above principles under Weihrauch reducibility, nothing is changed by considering linear orders on other infinite sets than just $\omega$. For if $(L,\leq_L)$ is a partial order and $L$ is infinite, we can uniformly $L$-computably build a bijection $f : \omega \to L$, and pass to the isomorphic order $\leq_{L'}$ on $\omega$ defined by $x \leq_{L'} y$ if and only if $f(x) \leq_L f(y)$. Then given an infinite ascending sequence/chain or descending sequence/chain $S$ for $\leq_{L'}$, $f(S)$ will be such a sequence/chain for $(L,\leq_L)$. Thus, restricting to orders with field $\omega$ is merely a notational convenience.

We now turn from linear orders to partial orders. Given a partial order $\leq_P$, we write $x \mid_P y$ if $x$ and $y$ are $\leq_P$-incomparable, i.e., if neither $x \leq_P y$ nor $y \leq_P x$ holds. We say $x$ is \emph{isolated} (in $\leq_P$), or $\leq_P$-isolated, if almost every $y$ is $\leq_P$-incomparable with $x$. Parts 1 and 2 of the following definition, and the subsequent principles $\SCAC$ and $\WSCAC$, are due to Hirschfeldt and Shore \cite[Definition 3.2]{HS-2007} and Jockusch, Kastermans, Lempp, Lerman, and Solomon \cite[Definitions 1.1 and 2.1]{JKLLS-2009}, respectively.

\begin{definition}
	A partial order $\leq_P$ is
	\begin{enumerate}
		\item \emph{stable} if either every $x$ is small or isolated, or else every $x$ is large or isolated;
		\item \emph{weakly stable} if every $x$ is small, isolated, or large.
	\end{enumerate}
\end{definition}

\begin{highlight}{Stable chain/antichain principle ($\SCAC$)}
	Every stable partial order has an infinite chain or antichain.
\end{highlight}

\begin{highlight}{Stable chain/antichain principle ($\WSCAC$)}
	Every weakly stable partial order has an infinite chain or antichain.
\end{highlight}

It was shown in \cite[Theorem 2.2]{JKLLS-2009} that over $\RCA_0$, the principles $\SCAC$ and $\WSCAC$ are equivalent. However, the proof of the non-trivial direction of this equivalence, that $\SCAC \to \WSCAC$, uses the antecedent, $\SCAC$, twice. We show that this is a necessary feature of the proof.

\begin{theorem}\label{thm:wscac}
	$\WSCAC \ncred \SCAC$.
\end{theorem}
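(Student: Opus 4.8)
The starting point is that $\WSCAC \leq_\omega \SCAC$ — indeed $\WSCAC$ is provable from \emph{two} applications of $\SCAC$, essentially by the argument of \cite{JKLLS-2009} — so the content of the theorem is that one application will not suffice, even with the original instance available for the back-translation. I would prove this by a preservation argument based on lowness. The plan is to construct a computable weakly stable partial order $P$ that is \emph{not} stable — it has infinitely many small elements that are not large and infinitely many large elements that are not small — with the property that no infinite chain and no infinite antichain of $P$ has low Turing degree; and to pair this with the lemma that every computable stable partial order has a low infinite chain or antichain.

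Granting these, suppose $\WSCAC \cred \SCAC$ and apply the reduction to $P$. We get a partial order $\widehat P \leq_T P$, necessarily computable, which is an instance of $\SCAC$; recalling that an isolated element, being comparable with only finitely many elements, is simultaneously small and large, $\widehat P$ is either all-small or all-large. By the lemma it has a low infinite chain or antichain $\widehat S$, and since $P$ is computable we have $P \oplus \widehat S \equiv_T \widehat S$; so by the reduction some infinite chain or antichain of $P$ is computable from the low set $\widehat S$, hence is itself low — contradicting the choice of $P$.

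The lemma is the routine ``computable mathematics'' direction, paralleling the classical observation that stable linear orders of type $\omega + k$ or $k + \omega^*$ have computable solutions whereas those of type $\omega + \omega^*$ need not. If every element is large, the order has no infinite ascending chain, and an analysis by upward height presents $\omega$ as a union of finitely many computable antichains (or turns up an infinite one directly), yielding a computable — hence low — infinite antichain. If every element is small, one argues dually by $\height$: either some level set is an infinite computable antichain, or all level sets are finite, whence the order has unbounded height and an infinite ascending chain can be found along a computable finitely-branching tree, to which the low basis theorem applies. Either way a low solution is produced.

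The real work, and where I expect the main obstacle to lie, is the construction of $P$. It cannot be done ``by hand'' from a ready-made hard set: in every naive arrangement — a chain placed incomparably below an antichain, two oppositely oriented chains side by side, and the like — the relation $\leq_P$ is computable only when the partition of $\omega$ into small and large elements is already computable, and then $P$ inherits a computable, hence low, solution. What is required is that this partition be entangled with $\leq_P$ in the way the $\Delta^0_2$ cut is entangled with the order in a computable linear order of type $\omega + \omega^*$: each comparison $x <_P y$ is settled once and for all, but whether only finitely many elements are eventually placed below a given element is a $\Sigma^0_2$ question resolved only in the limit. Because the infinite chains of a weakly stable partial order consist entirely of small elements, the chains of $P$ must live inside a copy of an all-small order that is presented only $\Delta^0_2$-ly, and symmetrically $P$ must be built so that no low set computes an infinite antichain of its small part or of its large part. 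Concretely I would run a finite-injury construction in the spirit of the known construction of a $\Delta^0_2$ set having no infinite low subset in either itself or its complement: against each candidate low set — given as a $\Delta^0_2$ set together with a $\emptyset'$-approximation to its jump witnessing lowness — reserve a block of $\omega$, and use the approximation to arrange the comparisons inside that block so that the set, if infinite, is neither a chain nor an antichain of $P$, all the while keeping each element small (by permanently ceasing to add anything below it) or large (by placing an infinite antichain below it and putting nothing below newer elements), and keeping $P$ itself computable by committing to a comparison only after the relevant part of the approximation appears to have settled. The crux is to check that the injuries among requirements are compatible with weak stability, with the simultaneous control of both the chains and the antichains of $P$, and with the computability of $P$ itself.
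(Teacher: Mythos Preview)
Your approach has a fatal gap: the partial order $P$ you propose to construct cannot exist. Every computable weakly stable partial order has a low infinite chain or antichain. This follows from the very facts you invoke: by \cite[Corollary 3.5]{HS-2007} there is an $\omega$-model of $\SCAC$ consisting entirely of low sets, and since $\SCAC$ and $\WSCAC$ are equivalent over $\RCA_0$ (two applications suffice, but that is irrelevant at the level of $\omega$-models), this is also an $\omega$-model of $\WSCAC$. Any computable weakly stable $P$ lies in this model, so it has a chain or antichain there, and everything in the model is low. Relativizing shows the same obstruction persists if you try to make $P$ merely low rather than computable. In short, lowness---or any degree-theoretic closure property preserved by both $\SCAC$ and $\WSCAC$---cannot separate them under $\cred$; the fact that $\WSCAC \leq_\omega \SCAC$ means no preservation-of-a-class argument of this shape can succeed.

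The paper's proof is accordingly of a different character. It does not find a single hard instance and argue about \emph{all} solutions to the reduced $\SCAC$-instance; instead it builds a (generic, non-computable) weakly stable $G$ and, for each $G$-computable stable partial order $\Phi^G$, constructs a \emph{particular} chain $C^{G,\Phi}$ and a \emph{particular} antichain $A^{G,\Phi}$, diagonalizing so that neither, even joined with $G$, computes an infinite chain or antichain of $G$. The key combinatorial lemma plays the three limiting behaviours of $G$ (small, isolated, large) against the two of $\Phi^G$: whatever pair of chain/antichain types the putative backward functionals $\Gamma,\Delta$ try to output, one can re-label the new elements of $G$ (via an analogue of Lemma~\ref{lem:ext}) so as to plant a wrong-type element in one of the outputs. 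This is where the asymmetry between two and three limiting behaviours is actually exploited, and it is not visible from the lowness perspective.
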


\noindent It is tempting to ascribe this separation simply to the fact that while $\SCAC$ allows only two kinds of limiting behaviors (either small and isolated, or large and isolated), $\WSCAC$ allows three (small, isolated, and large). However, this is a false intuition, as the core of the proof relies not just on the difference between the numbers of limiting behaviors, but also in an essential way on their combinatorial properties. In that sense, this result differs significantly from Patey's recent result that, say, $\RT^2_3 \ncred \RT^2_2$ (\cite{Patey-TA}, Corollary 3.15). Indeed, $\WSCAC$ \emph{is} computably reducible (even strongly Weihrauch reducible) to $\RT^2_2$ (even $\SRT^2_2$). (See, e.g., \cite[Proposition 3.3]{HS-2007}.)

We summarize the relationships between the principles mentioned above in Figure \ref{fig:summary}. The following corollary of our results shows that no additional relationships can be added to the diagram.

\begin{corollary}
	No additional arrows can be added to Figure \ref{fig:summary}.
\end{corollary}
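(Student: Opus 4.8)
The plan is to show that whenever $\mathsf{P},\mathsf{Q}$ are principles in Figure~\ref{fig:summary} with no directed path from $\mathsf{Q}$ to $\mathsf{P}$, one has $\mathsf{P}\nsured\mathsf{Q}$; this is precisely the assertion that the arrow $\mathsf{Q}\to\mathsf{P}$ --- hence any new arrow --- cannot be added. Since $\sured$ implies $\ured$ and $\cred$, and $\cred$ implies $\leq_\omega$, it suffices in each case to block the potential arrow by a failure of $\ured$ or a failure of implication over $\omega$-models. On the positive side there is nothing to do: the drawn arrows together with their transitive closure and the equivalences $\ADS\cequiv\ADC$, $\D^2\cequiv\SRT^2$, $\SADC\cequiv\SADS\cequiv\GeneralSADC\cequiv\GeneralSADS$ (Propositions~\ref{prop:basic} and~\ref{prop:stablebasic}, \cite{CLY-2010}), and the $\RCA_0$-equivalence $\SCAC\leftrightarrow\WSCAC$ \cite{JKLLS-2009} account for all reductions (and all $\omega$-model implications) that do hold.

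For the negative side I would assemble a short list of base non-reductions and propagate them along the diagram using transitivity of $\ured$ and of $\leq_\omega$. The base facts are our Theorems~\ref{thm:ads}--\ref{thm:wscac}, Dzhafarov's $\SRT^2\nured\D^2$ \cite{Dzhafarov-TA}, and a few standard $\omega$-model separations from the reverse-mathematics literature (see \cite{HS-2007, LST-2013, Hirschfeldt-2014}): that $\ADS$ does not imply $\SCAC$, that $\SRT^2$ does not imply $\ADS$, and that $\SCAC$ does not imply $\SRT^2$. (The first, together with $\SCAC\leq_\omega\WSCAC\leq_\omega\SRT^2$, also gives that $\ADS$ does not imply $\SRT^2$.)

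The verification splits into a few families. For a pair consisting of one of $\SRT^2,\D^2,\WSCAC,\SCAC$ and one of $\ADS,\ADC$: each of the former four is $\leq_\omega\SRT^2$ and lies $\leq_\omega$-above $\SCAC$, so an arrow in one direction would force $\ADS\leq_\omega\SRT^2$ (contradicting that $\SRT^2$ does not imply $\ADS$) and an arrow in the other direction would force $\SCAC\leq_\omega\ADS$ (contradicting that $\ADS$ does not imply $\SCAC$). For a pair in or around the stable cluster --- e.g.\ $\GeneralSADC$ vs.\ $\SADS$, $\ADC$ vs.\ $\SADS$, $\ADC$ vs.\ $\GeneralSADS$, $\D^2$ vs.\ $\SADS$, $\D^2$ vs.\ $\GeneralSADS$ --- one direction comes from pushing Theorem~\ref{thm:ads}, \ref{thm:d22}, or~\ref{thm:sads} through the genuine reductions $\SADS\sured\GeneralSADS$, $\GeneralSADC\sured\ADC$, and $\GeneralSADS\sured\SCAC\sured\WSCAC$ (for instance $\GeneralSADS\nured\D^2$ because $\SADS\ured\GeneralSADS$ and Theorem~\ref{thm:d22}; $\ADC\nured\SADS$ because $\GeneralSADC\ured\ADC$ and Theorem~\ref{thm:sads}), and the other direction is handled as in the previous family via the $\omega$-model separations, using $\GeneralSADS,\SADS\leq_\omega\ADS$ and $\D^2\cequiv\SRT^2$. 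Finally, for $\D^2$ against $\WSCAC$ or $\SCAC$: the $\D^2\nsured$ direction uses $\D^2\cequiv\SRT^2$, $\WSCAC\leq_\omega\SCAC$, and ``$\SCAC$ does not imply $\SRT^2$'', while the reverse direction uses Theorems~\ref{thm:d22} and~\ref{thm:wscac} together with the chain $\SADS\ured\GeneralSADS\ured\SCAC\ured\WSCAC$.

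The main obstacle is organizational rather than any single hard argument: one must route each base separation through a reduction that actually transports it in the required direction. The subtle point is that $\WSCAC\ured\SCAC$ is \emph{false} by Theorem~\ref{thm:wscac}, so $\WSCAC\nured\D^2$ cannot be obtained from $\SCAC\nured\D^2$ by composing downward; instead one uses the true reduction $\SCAC\ured\WSCAC$ and the observation that $\WSCAC\ured\D^2$ would force $\SCAC\ured\D^2$. I would therefore present the proof as an explicit case check over the unordered incomparable pairs of Figure~\ref{fig:summary}, in each case naming the base non-reduction invoked and the chain of reductions transporting it.
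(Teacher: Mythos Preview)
Your overall plan---assemble a small set of base non-reductions and propagate them through the diagram via transitivity of $\ured$ and $\leq_\omega$---is exactly the paper's approach, and most of your bookkeeping is fine. There is, however, one genuine gap in your list of base facts.

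You invoke, as a ``standard $\omega$-model separation,'' that $\SRT^2$ does not imply $\ADS$. This is not available in the sources you cite: \cite{HS-2007}, \cite{LST-2013}, and \cite{Hirschfeldt-2014} contain no $\omega$-model of $\SRT^2$ in which $\ADS$ (equivalently $\COH$) fails. The Chong--Slaman--Yang separation of $\COH$ from $\SRT^2_2$ uses nonstandard models and therefore does not yield $\ADS \nleq_\omega \SRT^2$; and whether there is an $\omega$-model witnessing this was, at the time of the results you cite, a well-known open problem. Since you use this claimed separation to block every arrow from $\{\SRT^2,\D^2,\WSCAC,\SCAC\}$ (and, by extension, from the stable principles below them) to $\{\ADS,\ADC\}$, a substantial portion of your case analysis is currently unsupported.

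The paper closes this family of cases by a different route that stays entirely within Weihrauch reducibility: it observes that the Hirschfeldt--Shore proof of $\ADS\to\COH$ in fact gives $\COH \sured \ADC$, and then appeals to Dzhafarov's result \cite[Corollary~4.5]{Dzhafarov-2015} that $\COH \nured \SRT^2$. Composing, $\ADC \nured \SRT^2$; since every principle in the diagram other than $\ADS$ and $\ADC$ is $\sured \SRT^2$, this single fact rules out all the arrows you were trying to block with the unavailable $\omega$-model separation. If you replace your item ``$\SRT^2$ does not imply $\ADS$'' with this $\COH$-based argument, the rest of your outline goes through. (A minor point: in your last paragraph, Theorem~\ref{thm:wscac} is not actually needed to get $\WSCAC \nured \D^2$; the chain $\SADS \ured \GeneralSADS \ured \SCAC \ured \WSCAC$ together with Theorem~\ref{thm:d22} already does the job.)
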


\begin{proof}
	First, we show that no arrows pointing to $\SRT^2$ or $\D^2$ can be added. As mentioned above, that $\SRT^2 \nured \D^2$ is shown in \cite[Corollary 3.3]{Dzhafarov-TA}. For the other possible arrows, it suffices to show that $\D^2 \nured \ADS$ and $\D^2 \nured \WSCAC$. These follow by Corollaries 2.29 and 3.12 in Hirschfeldt and Shore \cite{HS-2007}. The former gives an $\omega$-model of $\ADS$ in which $\SRT^2$ (and hence also $\D^2$, since the two are computably equivalent) fails. The latter gives an $\omega$-model of $\CAC$ (and hence of $\WSCAC$) in which $\SRT^2$ fails.
	
	Next, we show that no arrows pointing to $\ADS$ or $\ADC$ can be added. Hirschfeldt and Shore \cite[Proposition 2.10]{HS-2007} showed that over $\RCA_0$, $\ADS$ implies the so-called cohesive principle, $\COH$ (see \cite[Section 1]{HS-2007} for the definition), and it is easy to check that their proof actually shows that $\COH \sured \ADC$. On the other hand, Dzhafarov \cite[Corollary 4.5]{Dzhafarov-2015} showed that $\COH \nured \SRT^2$. Hence, $\ADC \nured \SRT^2$. That $\ADS \nured \ADC$ follows by Theorem \ref{thm:ads}. For the other possible arrows, it suffices to show that $\ADC \nured \WSCAC$. The desired witness of $\ADC$ is any computable linear order with no low infinite ascending chain or descending chain, which exists by \cite[Proposition 2.11]{HS-2007}. By contrast, by \cite[Corollary 3.5]{HS-2007}, there is an $\omega$-model of $\SCAC$ (and hence of $\WSCAC$, since the two are equivalent over $\omega$-models) consisting entirely of low sets.
	
	Finally, we show that no arrows pointing to any of $\WSCAC$, $\SCAC$, $\GeneralSADS$, $\GeneralSADC$, or $\SADS$ can be added. By Theorem \ref{thm:ads}, $\SADS \nured \ADC$, and by Theorem \ref{thm:d22}, $\SADS \nured \D^2$. In particular, $\SADS \nured \GeneralSADC$, and no arrow can be added pointing to $\SCAC$. By Theorem \ref{thm:wscac}, we have in particular that $\WSCAC \nured \SCAC$, so also no arrow can be added to $\WSCAC$. And by Theorem \ref{thm:sads}, $\GeneralSADC \nured \SADS$, which dispenses with the remaining arrow.
\end{proof}

\section{Linear orders}\label{sec:ads}

In this section, we prove Theorems \ref{thm:ads}, \ref{thm:d22}, and \ref{thm:sads}.

\subsection{Preliminaries}\label{S:defn}

We assume familiarity with forcing in arithmetic (see, e.g., \cite[Chapter 3]{Shore-TA} for an overview). Throughout, \emph{generic} (with respect to a fixed forcing notion) will mean arithmetically generic.

In what follows, let $\FinLin$ be the set of all linear orders on initial segments of $\omega$. For $\lambda \in \FinLin$, let $\leq_\lambda$ denote its ordering relation, and let $|\lambda|$ denote the largest $n$ such that $\leq_\lambda$ orders $\omega \res n$. Thus,
\[
	\lambda = (\omega \res |\lambda|,~\leq_\lambda).
\]
We code members of $\FinLin$ by their canonical indices, so that the map $\lambda \mapsto |\lambda|$ is computable. We say a linear order $(L,\leq_L)$ (on $\omega$ or an initial segment of $\omega$) \emph{extends $\lambda$} if $\omega \res |\lambda| \subseteq L$ and for all $x,y < |\lambda|$, we have $x \leq_\lambda y$ if and only if $x \leq_L y$. Note, if $\mu \in \FinLin$ extends $\lambda$ then $|\mu| \geq |\lambda|$.

For a Turing functional $\Delta$ and a set $X$, we adopt the convention that if $\Delta^X(x)$ is run for $u$ steps, the computation only queries the oracle about numbers $\seq{x,y}$ with $x,y < u$. For $\lambda \in \FinLin$, we write $\Delta^\lambda(x)$ to mean that the computation is run for only $|\lambda|$ steps with $\lambda$ as an oracle. Thus, if $\Delta^\lambda(x)\!\downarrow$, then for every $\mu \in \FinLin$ extending $\lambda$, we have that $\Delta^\mu(x) \downarrow = \Delta^\lambda(x)$; similarly, for any linear order $\leq_L$ on $\omega$ that extends $\lambda$, we have that $\Delta^{\leq_L}(x) \downarrow = \Delta^\lambda(x)$.

\begin{definition}\label{defn:forcing}
	Let $\mathbb{L}$ be the following notion of forcing. A \emph{condition} is a pair $p = (\lambda^p, a^p)$ as follows:
	\begin{itemize}
		\item $\lambda^p \in \FinLin$;
		\item $a^p$ is a map $|\lambda^p| \to \{\small,\large\} \times (\omega \res |\lambda^p|+1)$;
		\item if $y \leq_{\lambda^p} x$ and $a^p(x) = (\small,t)$, then $y < t$ and $a^p(y) = (\small,u)$ for some $u$;
		\item if $x \leq_{\lambda^p} y$ and $a^p(x) = (\large,t)$, then $y < t$ and $a^p(y) = (\large,u)$ for some $u$.
	\end{itemize}
	A condition $q$ \emph{extends} $p$, written $q \leq_{\mathbb{L}} p$, if $\lambda^q$ extends $\lambda^p$ and $a^q \supseteq a^p$.
\end{definition}

The idea here is that $a^p$ represents an assignment of each $x < |\lambda^p|$ to either the set of small or large elements of a stable linear order being approximated by $\lambda^p$. Specifically, if $a^p(x) = (\small,t)$ then for all $y < |\lambda^p|$ with $y \geq t$ we must have $x \leq_{\lambda^p} y$, while if $a^p(x) = (\large,t)$ then for all such $y$ we must have $y \leq_{\lambda^p} x$. We say $x$ is \emph{$p$-small} if $a^p(x) = (\small,t)$ for some $t$, and \emph{$p$-large} if $a^p(x) = (\large,t)$ for some $t$.

It is easy to see that any generic filter $\mathcal{F}_{\mathbb{P}}$ on $\mathbb{L}$ gives rise to a linear order of type $\omega + \omega^*$, given by $\bigcup_{p \in \mathcal{F}_{\mathbb{P}}} \lambda^p$. We denote this order by $G = (\omega,\leq_G)$, and use this also as a name for the generic order in the $\mathbb{L}$ forcing language.

If $p$ is a condition, we say a linear order $(L,\leq_L)$ (on $\omega$ or an initial segment of $\omega$) \emph{respects $p$} if $\leq_L$ extends $\lambda^p$ and, for all $x < |\lambda^p|$ and all $y \in L$, if $a^p(x) = (\small,t)$ and $y \geq t$ then $x \leq_L y$, and if $a^p(x) = (\large,t)$ and $y \geq t$ then $y \leq_L x$. Note that if $q$ extends $p$, then $\lambda^q$ respects $p$.

Definition~\ref{defn:forcing} ensures that if $x < |\lambda|$, then $a^p(x) = (\small,t)$ or $a^p(x) = (\large,t)$ for some $t \leq |\lambda^p|$. Thus, if $\leq_L$ respects $p$ and $x < |\lambda^p|$ is $p$-small, $x \le_L y$ for all $y \in L$ with $y \ge |\lambda^p|$; similarly, if $x < |\lambda|$ is $p$-large, $y \le_L x$ for all $y \in L$ with $y \ge |\lambda^p|$. (In other words, if $\le_L$ respects $p$, all elements of $L$ not already in $\lambda^p$ will be $\le_L$-above all $p$-small elements of $\lambda^p$ and $\le_L$-below all $p$-large elements of $\lambda^p$.)

The next lemma establishes that if $\lambda \in \FinLin$ respects $p$, then there is a condition $q \leq_{\mathbb{L}} p$ with $\lambda^q = \lambda$.

\begin{lemma}\label{lem:ext}
	Let $p$ be a condition. If $\lambda \in \FinLin$ respects $p$ then
	%Let $M$ be the set of all $z < |\lambda|$ for which there is no $p$-small $w$ with $z \leq_\lambda w$, and no $p$-large $w$ with $w \leq_\lambda z$.
	there are $q_0,q_1 \leq_{\mathbb{L}} p$ such that $\lambda^{q_i} = \lambda$ and every $z$ with $|\lambda^p| \leq z < |\lambda|$ is $q_0$-small and $q_1$-large.
\end{lemma}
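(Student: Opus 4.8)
The plan is to construct $q_0$ and $q_1$ explicitly, keeping $\lambda^p$'s assignment on $|\lambda^p|$ unchanged and declaring every newly-added element small (for $q_0$) or large (for $q_1$), with threshold $|\lambda|$. Concretely, set $\lambda^{q_0} = \lambda^{q_1} = \lambda$, define $a^{q_0} : |\lambda| \to \{\small,\large\} \times (\omega\res|\lambda|+1)$ by $a^{q_0}\res|\lambda^p| = a^p$ and $a^{q_0}(z) = (\small, |\lambda|)$ for every $z$ with $|\lambda^p| \le z < |\lambda|$, and define $a^{q_1}$ symmetrically with $(\large, |\lambda|)$ in place of $(\small, |\lambda|)$. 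By construction $a^{q_i} \supseteq a^p$, and $\lambda$ extends $\lambda^p$ because $\lambda$ respects $p$; so once we check that $q_0$ and $q_1$ are conditions, they witness the lemma, since each new element is $q_0$-small and $q_1$-large by fiat. The typing of $a^{q_i}$ is immediate: the threshold $|\lambda|$ lies in $\omega\res|\lambda|+1 = \{0,\dots,|\lambda|\}$.

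It remains to verify the two coherence clauses of Definition~\ref{defn:forcing} for $q_0$; the argument for $q_1$ is symmetric. For the ``small'' clause, suppose $y \le_\lambda x$ and $a^{q_0}(x) = (\small,t)$. If $x < |\lambda^p|$, then $a^p(x) = (\small,t)$ and $t \le |\lambda^p|$; I would first argue that $y < |\lambda^p|$, since otherwise $y \ge |\lambda^p| \ge t$ and, $\lambda$ respecting $p$, this forces $x \le_\lambda y$, hence $x = y \ge |\lambda^p|$, contradicting $x < |\lambda^p|$. Then $y \le_{\lambda^p} x$, and the coherence of $p$ gives $y < t$ and $a^p(y) = a^{q_0}(y) = (\small,u)$ for some $u$. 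If instead $|\lambda^p| \le x < |\lambda|$, then $t = |\lambda|$, so $y < |\lambda| = t$ holds automatically; and I must show $y$ is $q_0$-small, which is clear when $y \ge |\lambda^p|$, while if $y < |\lambda^p|$ and $y$ were $p$-large with $a^p(y) = (\large,s)$, then $x \ge |\lambda^p| \ge s$ and the respecting hypothesis would give $x \le_\lambda y$, again forcing $x = y$ against $y < |\lambda^p| \le x$.

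For the ``large'' clause, observe that only elements below $|\lambda^p|$ are $q_0$-large, so if $a^{q_0}(x) = (\large,t)$ then $x < |\lambda^p|$, $a^p(x) = (\large,t)$, and $t \le |\lambda^p|$. Given $x \le_\lambda y$, the same respecting argument rules out $y \ge |\lambda^p|$ (which would force $y \le_\lambda x$, hence $x = y$), so $y < |\lambda^p|$, whence $x \le_{\lambda^p} y$ and the coherence of $p$ yields $y < t$ and $a^p(y) = a^{q_0}(y) = (\large,u)$, as required. This shows $q_0$ is a condition with $\lambda^{q_0} = \lambda$ and $q_0 \le_{\mathbb{L}} p$; likewise for $q_1$.

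The only mildly delicate point — and the place to be careful — is the repeated appeal to the ``respects'' hypothesis to rule out the bad configurations: namely, that an old element lying $\le_\lambda$-below a new (hence $q_0$-small) element cannot itself be $p$-large, and dually an old element lying $\le_\lambda$-above a new (hence $q_1$-large) element cannot be $p$-small. Everything else is routine bookkeeping with the definitions and the typing of $a^{q_i}$.
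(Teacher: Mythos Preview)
Your proof is correct and follows essentially the same approach as the paper: the same explicit construction of $a^{q_i}$ (agree with $a^p$ below $|\lambda^p|$, assign $(\small,|\lambda|)$ or $(\large,|\lambda|)$ to new elements), followed by the same verification using the ``respects'' hypothesis to rule out the problematic configurations. The only difference is organizational---the paper splits into four cases on the positions of both $x$ and $y$ relative to $|\lambda^p|$, whereas you split first on $x$ and then handle $y$---but the content is identical.
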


\begin{proof}
	Fix $i \in \{0,1\}$. Define $a_i : |\lambda| \to \{\small,\large\} \times (\omega \res |\lambda|+1)$ as follows. For $z < |\lambda^p|$, define $a_i(z) = a^p(z)$. For $z$ with $|\lambda^p| \leq z < |\lambda|$, define $a_i(z) = (\small,|\lambda|)$ if $i = 0$, and define $a_i(z) = (\large,|\lambda|)$ if $i = 1$. Now let $q_i = (\lambda,a_i)$. We claim this is a condition, whence it follows that $q_i \leq_{\mathbb{L}} p$. It suffices only to verify the last two clauses in Definition~\ref{defn:forcing}. First, suppose $y \leq_{\lambda^{q_i}} x$ and $a^{q_i}(x) = (\small,t)$ for some $t$. We must show that $y < t$ and $y$ is $q_i$-small. We break into the following cases.
	
	\begin{highlight}{Case 1:}
		$x,y < |\lambda^p|$. In this case, we also have $y \leq_{\lambda^p} x$, since $\lambda^{q_i} = \lambda$ extends $\lambda^p$. Thus, $y < t$ and $y$ is $p$-small. By definition of $a^{q_i} = a_i$, this also means $y$ is $q_i$-small.
	\end{highlight}
	
	\begin{highlight}{Case 2:}
		$x < |\lambda^p|$ and $|\lambda^p| \leq y < |\lambda^q|$. By definition of $a^{q_i}$, we must have $a^p(x) = (\small,t)$, so $t \leq |\lambda^p| \leq y$. But then we cannot have $y \leq_{\lambda^{q_i}} x$, since $\lambda^{q_i}$ respects $p$. Thus, this case cannot obtain.
	\end{highlight}
	
	\begin{highlight}{Case 3:}
		$y < |\lambda^p|$ and $|\lambda^p| \leq x < |\lambda^q|$. Since $x \geq |\lambda^p|$, we must have that $t = |\lambda| = |\lambda^{q_i}|$, and since $|\lambda^p| \leq |\lambda^{q_i}|$, we have $y < t$. If $y$ were $p$-large, then we could not have $y \leq_{\lambda^{q_i}} x$ since $\lambda^{q_i}$ respects $p$, so $y$ must be $p$-small, and hence also $q_i$-small.
	\end{highlight}
	
	\begin{highlight}{Case 4:}
		$|\lambda^p| \leq x,y < |\lambda^q|$. Again, we must have $t = |\lambda^{q_i}|$, so certainly $y < t$. Since $x,y \geq |\lambda^{q_i}|$, $x$ and $y$ are either both $q_i$-small or both $q_i$-large, assuming $i = 0$ or $i = 1$ respectively. Hence, $y$ must be $q_i$-small.
	\end{highlight}
	
	\noindent We can similarly verify that if $x \leq_{\lambda^{q_i}} y$ and $a^{q_i}(x) = (\large,t)$, then $y < t$ and $y$ is $q_i$-large. This completes the proof.
\end{proof}

\begin{proposition}\label{prop:nocomp}
	If $G = (\omega,\leq_G)$ is the linear order of type $\omega + \omega^*$ given by a generic filter on $\mathbb{L}$, then $G$ has no $G$-computable infinite ascending or descending sequence.
\end{proposition}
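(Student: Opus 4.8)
The plan is to construct $G$ by a density argument. For each Turing functional $\Delta$, I would show that the sets
\[
D_\Delta^{\mathrm{asc}} = \{\, q : q \text{ forces that } \Delta^{\leq_G}\text{ is not an infinite ascending sequence for }\leq_G \,\}
\]
and $D_\Delta^{\mathrm{desc}}$, defined in the same way with ``descending'' in place of ``ascending'', are dense in $\mathbb{L}$. Since every $G$-computable set is of the form $\Delta^{\leq_G}$ for some $\Delta$ (and if $\Delta^{\leq_G}$ fails to be a total $\{0,1\}$-valued function it is vacuously not an ascending or descending sequence), a sufficiently generic $G$ meets all of these dense sets and hence has no $G$-computable infinite ascending or descending sequence.

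For density of $D_\Delta^{\mathrm{asc}}$ I would fix a condition $p$; the key idea is not to try to control the finite order $\leq_{\lambda^q}$ but to use the small/large assignment to force an ``inversion''. Case~1: suppose there are $\lambda \in \FinLin$ extending $\lambda^p$ and respecting $p$, and an $a$ with $|\lambda^p| \leq a < |\lambda|$, such that $\Delta^\lambda(a)\!\downarrow = 1$. Apply Lemma~\ref{lem:ext} to obtain the condition $q_1 \leq_{\mathbb{L}} p$ with $\lambda^{q_1} = \lambda$ in which every new element, in particular $a$, is made $q_1$-large with threshold $|\lambda|$, so $a^{q_1}(a) = (\large, |\lambda|)$. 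Then $q_1$ forces $\Delta^{\leq_G}$ not to be an infinite ascending sequence: if $\leq_G$ respects $q_1$ (as it does, being given by a generic filter containing $q_1$) and $\Delta^{\leq_G}$ were such a sequence, then $a \in \Delta^{\leq_G}$ since $\leq_G$ extends $\lambda$, and by infinitude there is some $d \in \Delta^{\leq_G}$ with $d \geq |\lambda|$; but $a < d$ while $d <_G a$ (since $\leq_G$ respects $q_1$, $a$ is $q_1$-large with threshold $|\lambda| \leq d$, and $d \neq a$), contradicting the definition of an ascending sequence.

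Case~2: no such $\lambda$ and $a$ exist. Then $p$ itself already forces $\Delta^{\leq_G} \subseteq \{0,\dots,|\lambda^p|-1\}$, hence forces it finite, hence not an infinite ascending sequence: if $\leq_G$ respects $p$ and $\Delta^{\leq_G}(d)\!\downarrow = 1$ for some $d \geq |\lambda^p|$, then this computation converges from a finite initial segment of $\leq_G$, so by genericity there is a $q \leq_{\mathbb{L}} p$ in the generic filter with $|\lambda^q|$ past both $d$ and the use of the computation and with $\Delta^{\lambda^q}(d)\!\downarrow = 1$; since $\lambda^q$ extends and respects $p$ and $|\lambda^p| \leq d < |\lambda^q|$, this contradicts the Case~2 hypothesis. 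Density of $D_\Delta^{\mathrm{desc}}$ is entirely symmetric: in Case~1 one instead takes the condition $q_0$ of Lemma~\ref{lem:ext}, making $a$ $q_0$-small with threshold $|\lambda|$, so that any $d \in \Delta^{\leq_G}$ with $d \geq |\lambda|$ satisfies $a < d$ and $a <_G d$, violating the definition of a descending sequence; Case~2 is identical.

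The step I expect to require the most care is the bookkeeping in Case~2: reconciling the ``run for $|\lambda|$ steps with oracle $\lambda$'' convention of Section~\ref{S:defn} with genericity, so that a convergent computation $\Delta^{\leq_G}(d)$ with $d \geq |\lambda^p|$ genuinely reflects down to some $\Delta^{\lambda^q}(d)\!\downarrow = 1$ with $\lambda^q$ a legitimate extension of $p$ that respects $p$ and satisfies $|\lambda^q| > d$. Everything else should be routine: Lemma~\ref{lem:ext} does the work of upgrading a finite order $\lambda$ to an actual condition carrying the small/large assignment we want, and the observation that an infinite set must contain elements beyond every fixed threshold is exactly what makes the inversion in Case~1 unavoidable no matter how $\Delta$ behaves.
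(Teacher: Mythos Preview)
Your proof is correct and follows essentially the same approach as the paper: use Lemma~\ref{lem:ext} to label a newly-enumerated element of $\Delta^G$ as large (for the ascending case) or small (for the descending case), forcing an inversion with any later element of $\Delta^G$. The only difference is cosmetic: the paper finds \emph{two} new elements $x<y$ in $\Delta^G$, reads off whether $x<_\lambda y$ or $y<_\lambda x$, and then chooses a single extension $q$ that rules out both the ascending and descending possibilities at once, whereas you handle the two directions separately and need only one new element for each---a mild simplification.
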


\begin{proof}
	Fix a condition $p$ and a Turing functional $\Delta$, and suppose $p$ forces that $\Delta^G$ is total and infinite. We show there is a $q \leq_{\mathbb{L}} p$ forcing that $\Delta^G$ is not an ascending or descending sequence. Fix $\lambda \in \FinLin$ that respects $p$ such that there are numbers $x,y$ with $|\lambda^p| \leq x < y < |\lambda|$ and $\Delta^{\lambda}(x) \downarrow = \Delta^{\lambda}(y) \downarrow = 1$. Let $q_0,q_1$ be the extensions of $p$ given by Lemma~\ref{lem:ext}. If $x <_\lambda y$, let $q = q_1$. Then $a^q(x) = (\large,|\lambda|)$, so $q$ forces that $z \leq_G x$ for all $z \geq |\lambda|$. Suppose $q$ has an extension $r$ forcing that $\Delta^{G}$ is an infinite ascending or descending sequence for $G$. Since $r$ forces that $x,y \in \Delta^{G}$ and $x <_G y$, it must consequently force that $\Delta^{G}$ is an infinite ascending sequence, as $x < y$. But then $r$ must also force that there is a $z > |\lambda|$ in $\Delta^{G}$ with $x \leq_G z$, which is a contradiction. Hence, there can be no such extension $r$ of $q$, meaning $q$ forces that $\Delta^{G}$ is not an infinite ascending or descending sequence for $G$. If instead $y \leq_\lambda x$, we let $q = q_0$, and the argument is analogous.
\end{proof}

By contrast, we have the following basic fact about ``unbalanced'' linear orders, the proof of which is left to the reader.

\begin{proposition}\label{prop:infsubset}
	Let $\leq_L$ be a linear order and $X$ an infinite set.
	\begin{enumerate}
		\item If $\leq_L$ has no infinite ascending chain contained in $X$, then it has an \mbox{$\le_L \oplus X$}-computable infinite descending sequence contained in $X$.
		\item If $\leq_L$ has no infinite descending chain contained in $X$, then it has an \mbox{$\le_L \oplus X$}-computable infinite ascending sequence contained in $X$.
	\end{enumerate}
\end{proposition}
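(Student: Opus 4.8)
My plan is to prove the two parts by essentially the same argument; I will spell out part~(1) and indicate at the end the trivial modifications for part~(2). So fix $\leq_L$ and an infinite $X$ having no infinite ascending chain for $\leq_L$ contained in $X$, and write $<$ for the usual ordering of $\omega$.

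The first (and only substantive) step is to identify the obstruction to a naive greedy search. Call $x \in X$ \emph{blocked} if $x <_L z$ for every $z \in X$ with $z > x$; these are exactly the elements from which a descending sequence cannot be extended within the $\omega$-tail of $x$. I claim the set $B$ of blocked elements is finite. Suppose not. If $x, y \in B$ with $x < y$, then applying the definition of ``blocked'' to $x$ (with the witness $z = y$) gives $x <_L y$; so on $B$ the orderings $<$ and $<_L$ coincide, which means $B$ is an ascending sequence for $\leq_L$, and therefore, being infinite, an infinite ascending chain for $\leq_L$ contained in $X$ --- contradicting the hypothesis. Hence $B$ is finite.

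Next I run the greedy construction, but started past $B$. Choose a natural number $m$ with $B \subseteq \{0, \dots, m\}$, let $s_0$ be the least element of $X$ exceeding $m$ (which exists since $X$ is unbounded), and recursively let $s_{n+1}$ be the least $z \in X$ with $z > s_n$ and $z <_L s_n$. A one-line induction shows every $s_n$ is defined and exceeds $m$: if $s_n > m$ then $s_n \notin B$, so there is some $z \in X$ with $z > s_n$ and $z <_L s_n$, whence $s_{n+1}$ is defined and $s_{n+1} > s_n > m$. Since $s_0 < s_1 < \cdots$ and $s_0 >_L s_1 >_L \cdots$, the set $S = \{s_n : n \in \omega\}$ is an infinite descending sequence for $\leq_L$ contained in $X$; moreover the recursion computing the $s_n$ queries only $\leq_L \oplus X$ (each step's search terminates precisely because a suitable $z$ exists), so $S$ is computable from $\leq_L \oplus X$, with the number $m$ hard-coded.

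The one point that might look like an obstacle is that $m$ cannot be found uniformly: $B$ is only co-c.e.\ and there is no computable bound on it. But this is irrelevant for the statement, which asks only that for each $\leq_L$ and $X$ there \emph{exist} an infinite descending sequence computable from $\leq_L \oplus X$; the finitely many bits encoded by $m$ may be supplied non-uniformly. So the genuine content is the finiteness of $B$, and everything afterwards is bookkeeping. For part~(2) one repeats the argument with ``$x >_L z$'' in place of ``$x <_L z$'' throughout: the corresponding bad set is then a descending sequence (hence an infinite descending chain, if infinite), and the greedy search --- now taking $s_{n+1}$ to be the least $z \in X$ with $z > s_n$ and $z >_L s_n$ --- yields an infinite ascending sequence contained in $X$ and computable from $\leq_L \oplus X$.
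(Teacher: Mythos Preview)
Your proof is correct. The paper actually leaves this proposition as an exercise (``the proof of which is left to the reader''), so there is no authors' proof to compare against; your argument---bounding the finite set of ``blocked'' elements and then running the greedy search past it---is exactly the standard way to fill in such a gap, and your remark about the non-uniformity of the parameter $m$ is apt given the paper's later concern with uniform versus non-uniform reductions.
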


\noindent (Note that if $\leq_L$ has no infinite ascending/descending chain contained in $X$, then it also has no infinite ascending/descending sequence contained in $X$.)

\subsection{Proofs of the theorems}

In what follows, if $F$ is a finite set and $X$ is a non-empty set, we write $F < X$ or $F \leq X$ if $\max F < \min X$ or $\max F \leq \min X$, respectively. For $n \in \omega$, we write $n < X$ or $n \leq X$ if $\{n\} < X$ or $\{n\} \leq X$.

\begin{definition}\label{d:seetapun}
	Let $\leq_L$ be a linear order, and $\Psi$ a functional.
	\begin{enumerate}
		\item A finite set $F$ is an \emph{ascending blob} (respectively, \emph{descending blob}) if there exist $x < y$ such that $x <_L y$ (respectively, $y <_L x$) and $\Psi^{\leq_L \oplus F}(x) \downarrow = \Psi^{\leq_L \oplus F}(y) \downarrow = 1$. We call $\seq{x,y}$ the \emph{witness} for $F$.
		\item If $F_0 < F_1 < \cdots$ is an infinite sequence of ascending blobs (respectively, descending blobs), the \emph{ascending Seetapun tree} (respectively, \emph{descending Seetapun tree}) generated by this sequence is the set of all $\alpha \in \omega^{<\omega}$ such that $\alpha(i) \in F_i$ for all $i$, and there is no ascending blob (respectively, descending blob) $F \subseteq \ran(\alpha \res |\alpha| - 1)$.
	\end{enumerate}
\end{definition}

Note that in either the ascending or descending case, the Seetapun tree is a finitely branching tree, and if $\alpha$ is a node in it then $\alpha(i) < \alpha(i+1)$ for all $i < |\alpha|+1$. Thus, if $P$ is any infinite path through this tree, $\ran(P)$ is infinite. Note also that if $\alpha$ is a terminal node in an ascending Seetapun tree (respectively, descending Seetapun tree), then there is an ascending blob (respectively, descending blob) $F \subseteq \ran(\alpha)$. So no infinite path $P$ through this tree has any such blob in its range.

\begin{lemma}\label{lem:ascdescblobs}
	Let $\leq_L$ be a linear order, $\Psi$ a functional, and $X$ an infinite set.
	\begin{enumerate}
		\item Either there is an infinite sequence $F_0 < F_1 < \cdots$ of ascending blobs contained in $X$, and the ascending Seetapun tree generated by this sequence is finite, or there is an infinite set $Y \subseteq X$ that contains no ascending blob.
		\item Either there is an infinite sequence $F_0 < F_1 < \cdots$ of descending blobs contained in $X$, and the descending Seetapun tree generated by this sequence is finite, or there is an infinite set $Y \subseteq X$ that contains no descending blob.
	\end{enumerate}
\end{lemma}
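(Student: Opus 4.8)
Both parts are symmetric, so I focus on part 1; part 2 is identical with ``ascending'' replaced by ``descending'' throughout. The key observation is that the two alternatives in the statement really are the only possibilities, and which one holds is governed by whether $X$ contains infinitely many ascending blobs or only finitely many.

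\textbf{Case analysis.} First I would ask whether there is an infinite sequence $F_0 < F_1 < \cdots$ of ascending blobs with all $F_i \subseteq X$. If there is \emph{no} such sequence, then in particular there are only finitely many pairwise-disjoint ascending blobs contained in $X$; fix a maximal such finite collection, let $b$ be an upper bound for the union of these blobs, and set $Y = \{x \in X : x > b\}$. Any ascending blob $F \subseteq Y$ would be disjoint from all the blobs in our collection (since $\min F > b$), contradicting maximality. So $Y$ is an infinite subset of $X$ containing no ascending blob, and we are in the second alternative. (Here one must be slightly careful: a ``blob'' is a finite set, not necessarily an interval, so I should phrase maximality in terms of disjointness of the finite sets, and use that a new blob $F \subseteq Y$ is automatically disjoint from everything below $b$.)

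\textbf{The main point.} Now suppose such an infinite sequence $F_0 < F_1 < \cdots$ of ascending blobs contained in $X$ does exist; fix one. Consider the ascending Seetapun tree $T$ generated by this sequence. If $T$ is finite, we are in the first alternative and done. If $T$ is infinite, then since $T$ is finitely branching (each level $i$ has its branching bounded by $|F_i|$), K\"onig's lemma gives an infinite path $P$ through $T$. As noted right after Definition~\ref{d:seetapun}, along any node $\alpha$ of $T$ we have $\alpha(i) < \alpha(i+1)$, so $\ran(P)$ is an infinite set; moreover $\ran(P) \subseteq \bigcup_i F_i \subseteq X$, and by the definition of the Seetapun tree no ascending blob $F$ is contained in $\ran(\alpha \res |\alpha|-1)$ for any $\alpha \prec P$, hence no ascending blob is contained in $\ran(P)$. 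So $Y = \ran(P)$ witnesses the second alternative. In either subcase one of the two disjuncts holds.

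\textbf{Expected obstacle.} There is no real obstacle here — the lemma is essentially a bookkeeping device packaging a pigeonhole/compactness dichotomy. The one place to be careful is the bookkeeping in the ``no infinite sequence of blobs'' case: I must extract a genuine infinite \emph{subset} of $X$ avoiding all ascending blobs, not merely avoiding the blobs in some fixed sequence, and the maximal-disjoint-family argument is what does this cleanly. The use of K\"onig's lemma is the only nonconstructive ingredient, and it is exactly the finite branching of the Seetapun tree (pointed out in the remark following Definition~\ref{d:seetapun}) that licenses it.
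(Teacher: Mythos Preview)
Your proposal is correct and follows essentially the same route as the paper's proof: split on whether an infinite $<$-increasing sequence of ascending blobs exists in $X$, and in the affirmative case apply K\"onig's lemma to the (finitely branching) Seetapun tree to extract $Y = \ran(P)$. The only cosmetic difference is in the negative case: the paper observes directly that the failure of the greedy construction yields an $n$ with no blob $F \subseteq X$ satisfying $\min F > n$, and takes $Y = X - n$, whereas you reach the same conclusion via a maximal pairwise-disjoint family of blobs---equivalent, since an infinite pairwise-disjoint family would let you extract an infinite $<$-ordered subsequence.
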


\begin{proof}
	We prove part 1, the proof of part 2 being symmetric. Suppose it is not the case that there is an infinite sequence of ascending blobs contained in $X$ such that the ascending Seetapun tree generated by this sequence is finite. Then there are two cases to consider. First, suppose there is no infinite sequence $F_0 < F_1 < \cdots$ of ascending blobs contained in $X$. Then for some $n$, there can be no ascending blob $F \subseteq X$ with $n < \min F$. Thus, we can let $Y = X - n$. Second, suppose there is an infinite sequence $F_0 < F_1 < \cdots$ of ascending blobs, but the ascending Seetapun tree generated by it is infinite. In this case, choose any path $P$ through this tree, and let $Y = \ran(X)$. Since $P(i) \in F_i \subseteq X$ for all $i$, we have $Y \subseteq X$, and by definition, there is no ascending blob contained in $Y$.
\end{proof}

We can now prove our first main theorem. While we could give a simpler proof here, more in the style of that of Theorem \ref{thm:sads} below, the one we give is only slightly more intricate, and has the advantage of setting up the more involved proof of Theorem \ref{thm:d22}.

\begin{highlight}{Theorem~\ref*{thm:ads}.}
	$\SADS \nured \ADC$.	
\end{highlight}

\begin{proof}
	Fix functionals $\Phi$ and $\Psi$. We build a linear ordering $\leq_M$ of type $\omega + \omega^*$ to witness that $\SADS$ is not Weihrauch reducible to $\ADC$ via these functionals. If there is any linear order that $\Phi$ does not map to a linear order, we can just let $\leq_M$ be this order, and then we are done. We thus assume this is not the case. In particular, it must be forced in $\mathbb{L}$ that $\Phi^G$ is a linear order.

	If there is a condition $p$ forcing that $\Phi^G$ has a $G$-computable infinite ascending chain or descending chain $S$, let $G$ be any generic extension of $p$. By Proposition~\ref{prop:nocomp}, $G$ computes no infinite ascending or descending sequence for itself, so in particular, $\Psi^{G \oplus S}$ cannot define such a sequence. Thus, in this case, we can let $\leq_M$ be $\leq_G$, and again we are done. For the remainder of the proof, we may consequently assume it is forced that $\Phi^G$ is a linear order with no $G$-computable infinite ascending chain or descending chain; i.e., that there are no $G$-computable $\ADC$-solutions to $\Phi^G$.

	Fix any order $\leq_L$ of type $\omega + \omega^*$. We consider two cases.

	\begin{highlight}{Case 1:}
		there is an infinite sequence $F_0 < F_1 < \cdots$ of ascending blobs (or descending blobs) contained in $\omega$, and the ascending Seetapun tree (respectively, descending Seetapun tree) generated by this sequence is finite. Let us consider the ascending case, the descending case being symmetric. Call the ascending Seetapun tree $T$, and say it has height $n$, meaning $n = \max \{|\alpha| : \alpha \in T\}$. Let $U$ be the set of all strings $\beta \in \omega^{<\omega}$ with $|\beta| = n$ and $\beta(i) \in F_i$ for all $i < n$. Each $\beta \in U$ extends a terminal $\alpha \in T$, and so in particular, $\ran(\beta)$ contains some ascending blob $F_\beta$. Choose one blob $F_\beta$ for each $\beta \in U$, and designate witnesses $\seq{x,y}$ for each of these, along with $u_{\beta}$, the maximum use of $\Psi^{\leq_L \oplus F_{\beta}}(x)$ and $\Psi^{\leq_L \oplus F_{\beta}}(y)$; similarly, designate witnesses $\seq{x,y}$ for each of $F_0,\ldots,F_{n-1}$, along with $u_i$, the maximum use of $\Psi^{\leq_L \oplus F_i}(x)$ and $\Psi^{\leq_L \oplus F_i}(y)$. Let $W$ be the collection of all these witnesses, and let
		\[
			m = \max (\{u_i : i < n\} \cup \{u_\beta : \beta \in U\}) + 1.
		\]
		If $\seq{x,y} \in W$ then $x < y < m$ and $x <_L y$. Let $p$ be the condition with $\lambda^p$ equal to $\leq_L$ restricted to $\omega \res m$, and $a^p(z) = (\large,m)$ for all $z < m$; thus, if $\seq{x,y} \in W$, then $x <_{\lambda^p} y$ and $x$ is $p$-large. (In the descending case, we would have $y <_L x$, and would instead choose $p$ so that $a^p(z) = (\small,m)$ for all $z < m$.) 
		
		Let $G$ be any generic extension of $p$, and let $\leq_{\Phi^G}$ denote the ordering relation of $\Phi^G$. We claim, and prove below, that either there is an $i < n$ such that $F_i$ extends to an infinite ascending chain for $\Phi^G$, or there is a $\beta \in U$ such that $F_\beta$ extends to a descending chain for $\Phi^G$. Call this sequence $S$. Since $S$ begins with a blob ($F_i$~or~$F_\beta$) as an initial segment and $G$ agrees with $\leq_L$ up to the use of all witnesses, we have that $\Psi^{G \oplus S}$ contains $x$ and $y$ for some $\seq{x,y} \in W$. Since $\leq_G$ agrees with $\leq$ below $m$, this means that if $\Psi^{G \oplus S}$ is an infinite ascending or descending sequence for $G$, it must be ascending (as it contains two elements in increasing order). Since $x$ is large in $G$, this is impossible; therefore, $\Psi^{G\oplus S}$ cannot define an $\ADS$-solution for $G$. We can thus let $\leq_G$ be the desired linear order $\leq_M$.
		
		It thus remains to prove the claim. Let $\beta \in U$ be the string such that $\beta(i)$ is the $\leq_{\Phi^G}$-largest element of $F_i$ for each $i < n$. We consider two subcases.
		
		\begin{highlight}{Subcase a:}
			for some $i < n$, $\beta(i)$ is not large in $\Phi^G$; that is, the set $A = \{z : \beta(i) \leq_{\Phi^G} z\}$ is infinite. Then $A$ is an infinite $G$-computable set. Since (by assumption) $\Phi^G$ has no infinite $G$-computable descending chain, by Proposition~\ref{prop:infsubset}, $A$ must contain an infinite ascending chain $S'$. As $\beta(i)$ is the $\leq_{\Phi^G}$-largest element of $F_i$, we must have that $S = F_i \cup S'$ is an ascending chain as well.
		\end{highlight}
		
		\begin{highlight}{Subcase b:}
			otherwise. Let $u$ be the $\leq_{\Phi^G}$-smallest element of $F_\beta$. Since $u$ is large in $\Phi^G$ and $\Phi^G$ has infinite field, $B = \{z : z \leq_{\Phi^G} \beta(i)\}$ is an infinite $G$-computable set. Since $\Phi^G$ has no infinite $G$-computable ascending chain, by Proposition~\ref{prop:infsubset}, $B$ must contain an infinite descending chain $S'$. As $u$ is the $\le_{\Phi^G}$-smallest element of $F_\beta$, we must have that $S = F_\beta \cup S'$ is a descending chain as well.
		\end{highlight}
	\end{highlight}
	
	\begin{highlight}{Case 2:}
		otherwise. Since there is no infinite sequence $F_0 < F_1 < \cdots$ of ascending blobs contained in $\omega$ such that the ascending Seetapun tree generated by this sequence is finite, by part 1 of Lemma \ref{lem:ascdescblobs}, there is an infinite $Y_0 \subseteq \omega$ that contains no ascending blob. And since there is no infinite sequence $F_0 < F_1 < \cdots$ of descending blobs contained in $\omega$ such that the ascending Seetapun tree generated by this sequence is finite, there is in particular no such sequence of blobs contained in $Y_0$. So by part 2 of Lemma \ref{lem:ascdescblobs}, there is an infinite $Y_1 \subseteq Y_0$ that contains no descending blob. Now if $S \subseteq Y_1$ is any infinite ascending chain or descending chain for $\Phi^{\leq_L}$ such that $\Psi^{\leq_L \oplus S}$ defines a set, then this set can contain at most one element. Indeed, if not, then some sufficiently long initial segment of $S$ would be either an ascending or descending blob, which cannot be. Thus, in this case, we simply let $\leq_M$ be $\leq_L$.
	\end{highlight}
	This completes the proof.
\end{proof}

We turn next to proving Theorem \ref{thm:d22}, that $\SADS \nured \D^2$. The proof is very similar to that of Theorem \ref{thm:ads} above, but there are some combinatorial differences. The additional complexity is not in the actual construction, but rather in the definitions. In particular, we need an elaboration on Definition \ref{d:seetapun} and Lemma \ref{lem:ascdescblobs}. Given a finite, finitely branching tree $S \subseteq \omega^{<\omega}$, we let $\height(S)$ denote the height of $S$, so $\height(S) = \max \{|\beta| : \beta \in S\}$. We also let $\ran(S) = \{ \ran(\beta) : \beta \in S\}$. If $S'$ is another such tree, we write $S < S'$ if $\max \bigcup_{\beta \in S} \ran(\beta) < \min \bigcup_{\beta \in S'} \ran(\beta)$.

\begin{definition}\label{d:seetapuntrees}
	Let $\leq_L$ be a linear order, and $\Psi$ a functional.
	\begin{enumerate}
		\item A finitely branching well-founded tree $S \subseteq \omega^{<\omega}$ is an \emph{ascending tree-blob} (respectively, \emph{descending tree-blob}) if for each terminal $\beta \in S$ there exists an ascending blob (respectively, descending blob) $F \subseteq \ran(\beta)$.
		%: i.e., there are $x < y$ such that $x <_L y$ (respectively, $y <_L x$) and $\Psi^{\leq_L \oplus F}(x) \downarrow = \Psi^{\leq_L \oplus F}(y) \downarrow = 1$.
		\item If $S_0 < S_1 < \cdots$ is an infinite sequence of ascending tree-blobs (respectively, descending tree-blobs), the \emph{ascending Seetapun tree} (respectively, \emph{descending Seetapun tree}) generated by this sequence is the set of all $\alpha \in \omega^{<\omega}$ such that $\alpha(i) \in \ran(S_i)$ for all $i$, and there is no ascending blob (respectively, descending blob) $F \subseteq \ran(\alpha \res |\alpha| - 1)$.
	\end{enumerate}
\end{definition}

Note that if $F$ is an ascending blob (respectively, descending blob), then the set of all initial segments of $F$ is an ascending tree-blob (respectively, descending tree-blob). Also, if $S_0 < S_1 < \cdots$ is an infinite sequence of ascending tree-blobs (respectively, descending tree-blobs) and the ascending Seetapun tree (respectively, descending Seetapun tree) generated by this sequence is finite, then this tree is itself an ascending Seetapun tree (respectively, descending Seetapun tree).

In either the ascending or descending case, if $S_0 < S_1 < \cdots$ is an infinite sequence of tree-blobs, and the Seetapun tree generated by this sequence is finite, say of height $n$, then any sequence of tree-blobs that begins with $S_0 < \cdots < S_{n-1}$ will generate the same Seetapun tree. Thus, in this case, we say the tree is generated by the finite sequence $S_0 < \cdots < S_{n-1}$.

\begin{definition}\label{d:forest}
	Fix $k \geq 2$. Let $\leq_L$ be a linear order, and $\Psi$ a functional. An \emph{ascending Seetapun $k$-forest} (respectively, \emph{descending Seetapun $k$-forest}) is a collection $\{S_{j,0} < \cdots < S_{j,t_j-1} : j < k\}$ as follows:
	\begin{itemize}
		\item $t_{k-1} = 1$, and for each $j < k-1$, $t_j = \sum_{i < t_{j+1}} \height(S_{j+1,i})$;
		\item for each $j < k$, $S_{j,0} < \cdots < S_{j,t_j-1}$ is a sequence of ascending tree-blobs (respectively, descending tree-blobs);
		\item for each $j < k-1$ and each $i$, $S_{j+1,i}$ is the ascending Seetapun tree (respectively, descending Seetapun tree), generated by $S_{j,t} < \cdots < S_{j,t + \height(S_{j+1,i})-1}$, where $t = \sum_{i' < i} \height(S_{j+1,i'})$.
	\end{itemize}
\end{definition}

\noindent In other words, $S_{1,0}$ is the Seetapun tree generated by
\[
	S_{0,0} < \cdots < S_{0,\height(S_{1,0})-1},
\]
$S_{1,1}$ is the Seetapun tree generated by
\[
	S_{0,\height(S_{1,0})} < \cdots < S_{0,\height(S_{1,0})+\height(S_{1,1})-1},
\]
and so on.

We say a tree-blob is \emph{contained} in a set $X$ if its range is, and we say a Seetapun $k$-forest $\{S_{j,0} < \cdots < S_{j,t_j-1} : j < k\}$ is \emph{contained} in $X$ if each $S_{j,i}$ is. We have the following analogue of Lemma \ref{lem:ascdescblobs}.

\begin{lemma}\label{lem:ascdesctreeblobs}
	Fix $k \geq 2$. Let $\leq_L$ be a linear order, $\Psi$ a functional, and $X$ an infinite set.
	\begin{enumerate}
		\item Either there is an ascending Seetapun $k$-forest contained in $X$, or there is an infinite set $Y \subseteq X$ that contains no ascending blob.
		\item Either there is a descending Seetapun $k$-forest contained in $X$, or there is an infinite set $Y \subseteq X$ that contains no descending blob.
	\end{enumerate}
\end{lemma}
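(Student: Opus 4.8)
The plan is to mirror the proof of Lemma~\ref{lem:ascdescblobs}: I prove part~1 (part~2 is symmetric), and I assume that no infinite $Y \subseteq X$ avoids ascending blobs, aiming to construct an ascending Seetapun $k$-forest contained in $X$. The engine is the following claim, which plays the role that König's lemma plays implicitly in Lemma~\ref{lem:ascdescblobs}: \emph{if no infinite subset of $X$ avoids ascending blobs and $R_0 < R_1 < \cdots$ is an infinite sequence of ascending tree-blobs contained in $X$, then the ascending Seetapun tree $T$ generated by $R_0 < R_1 < \cdots$ is finite.} For if $T$ were infinite, then, being finitely branching, it would have an infinite path $P$; since the $R_i$ are consecutive, $P$ would be strictly increasing, so $\ran(P) \subseteq \bigcup_i \ran(R_i) \subseteq X$ would be an infinite subset of $X$, and it would contain no ascending blob (any such blob, being finite, would lie in $\ran(P \res n)$ for some $n$, contradicting that $P \res (n+1) \in T$), contradicting the hypothesis on $X$. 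Being finite, $T$ is well-founded and finitely branching, each of its terminal nodes has an ascending blob in its range (the argument in the remark after Definition~\ref{d:seetapun} applies verbatim), and hence $T$ is itself an ascending tree-blob; moreover, if $\height(T) = h$ then, by the remark after Definition~\ref{d:seetapuntrees}, $T$ is already the Seetapun tree generated by $R_0 < \cdots < R_{h-1}$.

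With the claim in hand, I build the forest layer by layer. First, $X$ contains an infinite sequence $F_0 < F_1 < \cdots$ of ascending blobs (otherwise a final segment of $X$ is an infinite blob-free subset, as in the proof of Lemma~\ref{lem:ascdescblobs}), and taking $S_{0,i}$ to be the tree of initial segments of $F_i$ yields an infinite sequence $S_{0,0} < S_{0,1} < \cdots$ of ascending tree-blobs contained in $X$. Given such a sequence $S_{j,0} < S_{j,1} < \cdots$ with $j < k-1$, I apply the claim repeatedly: let $S_{j+1,0}$ be the Seetapun tree generated by $S_{j,0} < S_{j,1} < \cdots$, a finite ascending tree-blob generated by $S_{j,0} < \cdots < S_{j,h-1}$ with $h = \height(S_{j+1,0})$; then let $S_{j+1,1}$ be the Seetapun tree generated by the leftover sequence $S_{j,h} < S_{j,h+1} < \cdots$; and so on. Each $S_{j+1,i}$ is a finite ascending tree-blob generated by a consecutive block of the $S_{j,\cdot}$, the blocks exhaust the sequence, and $S_{j+1,0} < S_{j+1,1} < \cdots$ is again an infinite sequence of ascending tree-blobs contained in $X$. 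Iterating $k-1$ times produces such an infinite sequence at every layer $j < k$.

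Finally I truncate to fit Definition~\ref{d:forest}: set $t_{k-1} = 1$, keep $S_{k-1,0}$, and working downward, once $S_{j,0},\ldots,S_{j,t_j-1}$ are kept, set $t_{j-1} = \sum_{i < t_j} \height(S_{j,i})$ and keep the first $t_{j-1}$ tree-blobs of layer $j-1$. Since each kept $S_{j,i}$ was generated by a consecutive block of exactly $\height(S_{j,i})$ layer-$(j-1)$ tree-blobs and these blocks were laid down consecutively starting from $S_{j-1,0}$, the counts match, and $\{S_{j,0} < \cdots < S_{j,t_j-1} : j < k\}$ is an ascending Seetapun $k$-forest contained in $X$. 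I expect the delicate points to be the claim above --- in particular the König's lemma step and the verification that a finite Seetapun tree is a tree-blob --- together with checking that, after truncation, every clause of Definition~\ref{d:forest} holds, especially that ``$S_{j+1,i}$ is the Seetapun tree generated by $S_{j,t}<\cdots<S_{j,t+\height(S_{j+1,i})-1}$'' is preserved when one passes from the infinite generating sequence to its relevant finite initial segment.
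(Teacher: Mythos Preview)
Your proposal is correct and follows essentially the same approach as the paper. The paper organizes the argument as a two-step case split (first: some final segment of $X$ has no blob; second: some infinite sequence of tree-blobs in $X$ generates an infinite Seetapun tree), whereas you assume from the outset that no blob-free $Y \subseteq X$ exists and prove your ``engine'' claim directly; but these are contrapositives of one another, and the inductive construction of the layers and the final truncation are the same in both versions.
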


\begin{proof}
	We prove part 1. First, if for some $n$, there is no ascending blob $F \subseteq X$ with $n < \min F$, we can let $Y = X - n$. So suppose not. Since, as noted above, every ascending blob can be regarded as a tree-blob, it follows that there exists an infinite sequence of ascending tree-blobs contained in $X$. Next, suppose there is an infinite such sequence such that the Seetapun tree generated by it is infinite. Then we can let $Y$ be the range of any infinite path through this tree, and argue as in the second case of Lemma \ref{lem:ascdescblobs}. So suppose also that this is not the case. We now construct a Seetapun $k$-forest contained in $X$ inductively, as follows. Let $S_{0,0} < S_{0,1} < \cdots$ be any infinite sequence of ascending tree-blobs contained in $X$. Having built, for some $j < k-1$, an infinite sequence $S_{j,0} < S_{j,1} < \cdots$ of tree-blobs in $X$, we define $S_{j+1,0} < S_{j+1,1} < \cdots$. Suppose $S_{j+1,i'}$ has been defined for all $i' < i$, and let $t$ be least such that $\bigcup_{i' < i} \ran(S_{j+1,i'}) < \min \ran(S_{j,t})$. (If $i = 0$, set $t = 0$.) As $S_{j,t} < S_{j,t+1} < \cdots$ is itself an infinite sequence of tree-blobs contained in $X$, the ascending Seetapun tree generated by it must, by assumption, be finite, and we le this be $S_{j+1,i}$. Once the sequence $S_{k-1,0} < S_{k-1,1} < \cdots$ is built, we define $t_{k-1} = 1$, and for each $j < k-1$, $t_j = \sum_{i < t_{j+1}} \height(S_{j+1,i})$. It is easy to see that $\{S_{j,0} < \cdots < S_{j,t_j-1} : j < k\}$ is indeed a Seetapun $k$-forest, as desired. The proof of part 2 is analogous.
\end{proof}

Our final lemma for proving Theorem \ref{thm:d22} will allow us to build limit homogeneous sets for stable colorings. It appears, along with a proof, as Lemma 2.6 in \cite{Dzhafarov-TA}.

\begin{lemma}\label{lem:monochr}
	Fix $k \geq 2$. Let $\leq_L$ be a linear order, $\Psi$ a functional, and $\{S_{j,0} < \cdots < S_{j,t_j-1} : j < k\}$ an ascending or descending Seetapun $k$-forest. If $c : [\omega]^2 \to k$ is a stable coloring, then there is a $j < k$ and an $i < t_j$ such that for some terminal $\beta \in S_{j,i}$, $\lim_y c(x,y) = j$ for all $x \in \ran(\beta)$.
\end{lemma}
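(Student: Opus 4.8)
The plan is to extract, from the $k$-forest and the stable coloring $c$, a "dominant color" at each level $j < k$ by a pigeonhole argument on the limits of $c$, and then use the nesting structure of the forest (each higher-level tree-blob being a Seetapun tree generated by a consecutive block of lower-level tree-blobs) to propagate a terminal node whose range is monochromatic in the limit for the appropriate color. First I would recall the key combinatorial fact about Seetapun trees that is used repeatedly in the proof of Theorem~\ref{thm:d22} (and is implicit in Lemma~\ref{lem:ascdescblobs}): if a Seetapun tree $T$ is generated by tree-blobs $S_{j,t} < \cdots < S_{j,t+\height-1}$, then since $T$ is \emph{finite}, every string $\beta$ of length $\height(T)$ with $\beta(\ell) \in \ran(S_{j,t+\ell})$ properly extends a terminal node of $T$, and hence $\ran(\beta)$ contains an ascending (resp.\ descending) blob — meaning there exist $x < y$ in $\ran(\beta)$ with $x <_L y$ (resp.\ $y <_L x$) and $\Psi^{\leq_L \oplus F}(x)\downarrow = \Psi^{\leq_L \oplus F}(y) \downarrow = 1$ for some blob $F \subseteq \ran(\beta)$.

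The main step is a downward induction on $j$ from $k-1$ to $0$, maintaining the following invariant: at stage $j$ we have a color $c_j < k$ and an index $i_j < t_j$ and a terminal node $\beta_j \in S_{j,i_j}$ such that $\lim_y c(x,y) = c_j$ for all $x \in \ran(\beta_j)$ — and moreover the colors $c_{k-1}, c_{k-2}, \ldots, c_j$ are pairwise distinct. To start, $t_{k-1} = 1$, and $S_{k-1,0}$ is itself a Seetapun tree (generated by a block of level-$(k-2)$ tree-blobs, or, when $k=2$, a genuine Seetapun tree over level-$0$ tree-blobs); pick any terminal $\beta$ of $S_{k-1,0}$, apply stability to choose $c_{k-1} := \lim_y c(x_0, y)$ for the $\leq$-least $x_0 \in \ran(\beta)$ — but in fact I want \emph{all} $x \in \ran(\beta)$ to share a limit. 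This forces a refinement: rather than taking an arbitrary terminal node, I use the Seetapun-tree structure to \emph{find} a terminal node that is limit-homogeneous. Concretely, given a Seetapun tree $T$ generated by tree-blobs $S_t < \cdots < S_{t+h-1}$ contained in $X$, and a stable coloring $c$: for each of the (finitely many) nodes $\beta$ of length $h$ over these tree-blobs we have a blob-witness pair; since there are only $k$ possible limit-colors, one can thin each $\ran(S_{t+\ell})$ down so that within it all elements have the same $c$-limit (pigeonhole inside an infinite set, using that the forest is \emph{contained} in $X$ and $X$ is infinite — but the tree-blobs are fixed finite objects, so instead we pigeonhole \emph{across} the block: by stability, for each $\ell$ and each $n \in \ran(S_{t+\ell})$ the value $\lim_y c(n,y)$ is defined, and since there are $h$ indices $\ell$ but we only need one monochromatic terminal node, we walk down $T$ choosing at each level $\ell$ a child $n_\ell \in \ran(S_{t+\ell})$ so as to maximize agreement). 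The cleanest formulation: at level $j$, having fixed the block of level-$(j-1)$ tree-blobs generating $S_{j,i}$ and knowing (by the finiteness-of-Seetapun-tree hypothesis built into the $k$-forest definition) that $S_{j,i}$ is finite, I observe that the number of terminal nodes of $S_{j,i}$ is finite, each terminal $\beta$ assigns to each $x \in \ran(\beta)$ a well-defined limit color, and I claim one terminal node is limit-monochromatic because otherwise every terminal node has two elements of different limit color, which by a blob/nesting argument would let me build a level-$(j+1)$ tree-blob demanding $k+1$ distinct colors — contradiction with $c$ being a $k$-coloring.

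The heart of the argument — and the step I expect to be the genuine obstacle — is exactly this propagation: showing that if at level $j$ \emph{no} terminal node of \emph{any} $S_{j,i}$ is limit-monochromatic for any color outside $\{c_{k-1},\ldots,c_{j+1}\}$, then one can manufacture a contradiction by assembling a level-$(j+1)$ structure. The idea is that each $S_{j+1,i}$ is the Seetapun tree generated by a consecutive block of the $S_{j,\cdot}$; a terminal node $\alpha$ of $S_{j+1,i}$ has $\alpha(\ell) \in \ran(S_{j,t+\ell})$ and $\ran(\alpha)$ contains an ascending (resp.\ descending) blob; using stability and the failure hypothesis at level $j$, each coordinate $\alpha(\ell)$ can be chosen to carry a limit color, and by counting (there are $h = \height(S_{j+1,i}) = t_{j}$-portion indices but only $k$ colors, $j+1$ of which are already "used up" going downward), two coordinates repeat a color that equals one of the excluded $c$'s or else we get a fresh one — pushing the induction through. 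I would set this up carefully so that the distinctness of $c_{k-1}, \ldots, c_0$ is exactly what is needed: at level $0$ we land in $S_{0,i_0}$ with a color $c_0$ distinct from all $k-1$ colors above it, but $c_0 < k$, so in fact all $k$ colors $c_0, \ldots, c_{k-1}$ are distinct and we have \emph{some} terminal $\beta$ with $\lim_y c(x,y) = c_0$ for all $x \in \ran(\beta)$ — which is exactly the conclusion (with $j = 0$, or, retracing, with whichever level the induction first succeeds at). Since this is cited as Lemma~2.6 of \cite{Dzhafarov-TA} with a proof there, I would, for the final writeup, simply verify that the present definitions of tree-blob and Seetapun $k$-forest match those in \cite{Dzhafarov-TA} up to notation and refer to that proof, filling in only the translation.
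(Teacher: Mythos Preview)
Your final decision---to cite Lemma~2.6 of \cite{Dzhafarov-TA} after checking that the definitions line up---is exactly what the paper does; no proof is given in the paper itself beyond that citation.

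That said, the sketch you offer before arriving at that decision is tangled, and in places wrong. The downward induction with pairwise distinct colors $c_{k-1},\ldots,c_j$ is hard to start (as you notice, an arbitrary terminal at level $k-1$ need not be limit-monochromatic), the remark about ``pigeonhole inside an infinite set'' does not apply since tree-blobs are fixed finite objects, and the counting argument comparing $\height(S_{j+1,i})$ against $k$ is not the relevant comparison. The clean argument runs \emph{upward}: set $d(x)=\lim_y c(x,y)$ and show by induction on $j$ from $0$ to $k-1$ that either the lemma already holds for some $j'<j$, or every $S_{j,i}$ has a terminal node $\beta$ with $d(x)\geq j$ for all $x\in\ran(\beta)$. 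The base case $j=0$ is vacuous. For the step, if no such $\beta$ at level $j$ is monochromatic of color $j$, then each contains some element of color $\geq j+1$; choosing one such element from the terminal node in each $S_{j,t+\ell}$ in the block generating $S_{j+1,i'}$ produces a string of length $\height(S_{j+1,i'})$, which therefore extends a terminal node of $S_{j+1,i'}$ whose range avoids all colors $<j+1$. At $j=k-1$ the inequality $d(x)\geq k-1$ forces $d(x)=k-1$, giving the lemma.
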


We now give the proof of the theorem.

\begin{highlight}{Theorem~\ref*{thm:d22}}
	$\SADS \nured \D^2$.	
\end{highlight}

\begin{proof}
	Fix $\Phi$ and $\Psi$. We may assume that for some $k$, it is forced that $\Phi^G$ is a stable coloring $[\omega]^2 \to k$ with no $G$-computable infinite limit homogeneous set. By deleting colors from $\{0,\ldots,k-1\}$ if necessary and renaming the ones that remain, we may further assume that it is forced that for each $j < k$, there are infinitely many $x$ with $\lim_y \Phi^G(x,y) = j$. If it were the case that $k = 1$, then almost all $x$ would have the same limit under $\Phi^G$, so some co-initial segment of $\omega$ would be limit homogeneous for $\Phi^G$, which cannot be by assumption. Thus, $k \geq 2$.
	
	Fix any order $\leq_L$ of type $\omega + \omega^*$. We consider two cases.
	
	\begin{highlight}{Case 1:}
		there is an ascending Seetapun $k$-forest (or descending Seetapun $k$-forest).	Consider the ascending case, since the descending case is symmetric, and say the forest is $\{S_{j,0} < \cdots < S_{j,t_j-1} : j < k\}$. For each $j < k$, each $i < t_j$, and each terminal $\beta \in S_{j,i}$, there is then an ascending blob $F_\beta \subseteq \ran(\beta)$. Designate witnesses $\seq{x,y}$ for each such $F_\beta$, along with $u_\beta$, the maximum use of $\Psi^{\leq_L \oplus F_{\beta}}(x)$ and $\Psi^{\leq_L \oplus F_{\beta}}(y)$. Let $W$ be the collection of all these witnesses, and let $m$ be the maximum of all the uses $u_\beta$. For each $\seq{x,y} \in W$, let $p$ be the condition with $\lambda^p$ equal to $\leq_L$ restricted to $\omega \res m$, and $a^p(z) = (\large,m)$ for all $z < m$. Now let $G$ be any generic extension of $p$. By Lemma \ref{lem:monochr}, there is a $j < k$ and an $i < t_j$ such that for some terminal $\beta \in S_{j,i}$, $\lim_y \Phi^G(x,y) = j$ for all $x \in \ran(\beta)$. In particular, $F_\beta$ is limit homogeneous for $\Phi^G$, and by assumption, there are infinitely many elements that have the same limit under $\Phi^G$ as the elements of $F_\beta$. Thus, $F_\beta$ can be extended to an infinite limit homogeneous set $I$ for $\Phi^G$, with $x > m$ for all $x \in I - F_\beta$. Now as in the proof of Theorem \ref{thm:ads}, we conclude that $\Phi^{G \oplus I}$ cannot define an $\ADS$-solution for $G$, using the witnesses $x$ and $y$ for the fixed ascending blob $F_\beta$ and the fact that $x$ is large in $G$.
	\end{highlight}
	
	\begin{highlight}{Case 2:}
		otherwise. By part 1 of Lemma \ref{lem:ascdesctreeblobs}, there is an infinite $Y_0 \subseteq \omega$ that contains no ascending blob, and by part 2 of the same lemma, there is an infinite $Y_1 \subseteq Y_0$ that contains no descending blob. As in the proof of Theorem \ref{thm:ads}, if $I \subseteq Y_1$ is any infinite limit homogeneous set for $\Phi^{\leq_L}$ such that $\Psi^{\leq_L \oplus I}$ defines a set, then this set cannot be infinite. We thus let $\leq_M$ be $\leq_L$. \qedhere
	\end{highlight}
\end{proof}

We conclude by proving Theorem \ref{thm:sads}.

\begin{highlight}{Theorem~\ref*{thm:sads}.}
	$\GeneralSADC \nured \SADS$.
\end{highlight}

\begin{proof}
	Fix functionals $\Phi$ and $\Psi$. We build a stable linear ordering $\leq_M$ to witness that $\GeneralSADC$ is not Weihrauch reducible to $\SADS$ via these functionals. Analogously to Theorem \ref{thm:ads}, we may assume $\Phi$ takes every stable linear order to a linear order of type $\omega + \omega^*$.
	
	\begin{highlight}{Case 1:}
		there is a condition $p$ and an $n \in \omega$, for which there is no $q \leq p$ and no finite $F > n$	such that $q$ forces that $F$ is ascending under $\leq_{\Phi^G}$ and there is an $x > n$ with $\Psi^{G \oplus F}(x) \downarrow = 1$. In this case, let $G$ be any generic extension of $p$. Since $\Phi^G$ has order type $\omega + \omega^*$, it must have an infinite ascending sequence $S > n$. Then $\Psi^{G \oplus S}$ cannot define a non-empty, let alone an infinite, set.
	\end{highlight}
	
	\begin{highlight}{Case 2:}
		there is a condition $p$ and a finite set $F$ with the following properties:
		\begin{itemize}
			\item $p$ forces that $F$ is ascending under $\leq_{\Phi^G}$ and the $\leq_{\Phi^G}$-largest element of $F$ is small in $\leq_{\Phi^G}$;
			\item there is an $x$ such that $\Psi^{G \oplus F}(x) \downarrow = 1$.
		\end{itemize}
		By extending $p$ if necessary, we may assume that $p$ decides whether $x$ is small or large in $\leq_G$. If $p$ forces that $x$ is small, let $\leq_M$ be any linear order of type of $k + \omega^*$ that respects $p$. Then $x$ must also be small in $\leq_M$. Moreover, even though $M$ is not generic, $p$ forces that the $\leq_{\Phi^G}$-largest element, $u$, of $F$ is small in $\leq_{\Phi^G}$, and $u$ must also be small in $\Phi^{\leq_M}$. (Indeed, $p$ forces that there is a $z$ such that $u <_{\Phi^G} y$ for all $y > z$. Hence, there is a $z$ such that for every $y > z$ and every $q$ extending $p$, there is an $r$ extending $q$ forcing that $x <_{\Phi^G} y$, meaning that $\Phi^{\lambda^r}(x,y) \downarrow = 1$. Now if there were a $y > z$ such that $y <_{\Phi^M} x$ then we could take an initial segment $\lambda$ of $\leq_M$ extending $\lambda^p$ such that $\Phi^{\lambda}(x,y) \downarrow = 1$. By Lemma \ref{lem:ext}, we could then choose a $q \leq p$ with $\lambda^q = \lambda$, and for this $q$ no $r$ as above could exist, a contradiction.) Thus, $F$ is extendible to an infinite ascending sequence $S$ for $\Phi^{\leq_M}$, and $\Psi^{\leq_M \oplus S}(x) \downarrow = 1$. But since $\leq_M$ has order type $k + \omega^*$, it has no infinite ascending chain, and $x$ cannot be part of any infinite descending chain. Hence, $\Psi^{\leq_M \oplus S}$ cannot define a $\GeneralSADC$-solution for $\leq_M$. If $p$ instead forces that $x$ is large in $\leq_G$, we instead let $\leq_M$ be any linear order of type of $\omega + k$ that respects $p$, and argue similarly.
	\end{highlight}
	
	\begin{highlight}{Case 3:}
		otherwise. Let $\leq_G$ be any generic linear order of type $\omega + \omega^*$. The failure of Case 1 is a density fact. Since $G$ is generic, we can $G$-computably find a sequence of finite $\leq_{\Phi^G}$-ascending sets $F_0 < F_1 < \cdots$ and numbers $x_0 < x_1 < \cdots$ such that $\Psi^{G \oplus F}(x) \downarrow = 1$. Since Case 2 does not hold, the $\leq_{\Phi^G}$-largest element of each $F_i$ must be large in $\leq_{\Phi^G}$. Thus, we can $G$-computably pick out an increasing sequence $y_0 < y_1 < \cdots$ such that $y_0 >_{\Phi^G} y_1 >_{\Phi^G} \cdots$ (namely, $y_0$ is the $\leq_{\Phi^G}$-largest element of $F_0$, and given $y_i$, which is the $\leq_{\Phi^G}$-largest element of $F_j$, we search for the least $k > j$ such that the $\leq_{\Phi^G}$-largest element $y$ of $F_k$ satisfies $y <_{\Phi^G} y_i$, and we set $y_{i+1} = y$). Thus, the $y_i$ form a $G$-computable infinite descending sequence $S$ in $\leq_{\Phi^G}$. By Proposition \ref{prop:nocomp}, $G$ has no $G$-computable infinite ascending chain or descending chain, so in particular, $\Psi^{G \oplus S}$ cannot define such a sequence. \qedhere
	\end{highlight}
\end{proof}

\section{Partial orders}\label{sec:cac}

We now turn to our final result, Theorem \ref{thm:wscac}.

\subsection{Preliminaries}

We begin with a number of definitions that largely parallel those of Section \ref{S:defn} above. Let $\FinPO$ be the set of all partial orders on initial segments of $\omega$. For $\pi \in \FinPO$, let $\leq_\pi$ denote its ordering relation, and let $|\pi|$ denote the largest $n$ such that for all $x,y < n$, $\pi$ either orders $x$ and $y$ or declares them incomparable. We say a partial order $(P,\leq_P)$ \emph{extends $\pi$} if $\omega \res |\pi| \subseteq P$ and for all $x,y < |\pi|$, we have $x \leq_\pi y$ if and only if $x \leq_P y$. If $\rho \in \FinPO$ extends $\pi$ then $|\rho| \geq |\pi|$. We adopt the same use conventions for computations from members of $\FinPO$ as we did for computations from members of $\FinLin$.

\begin{definition}
	Let $\mathbb{P}$ be the following notion of forcing. A \emph{condition} is a pair $p = (\pi^p, a^p)$ as follows:
	\begin{itemize}
		\item $\pi^p \in \FinPO$;
		\item $a^p$ is a map $|\pi^p| \to \{\small,\large,\isolated\} \times (\omega \res |\pi^p|+1)$;
		\item if $y \leq_{\pi^p} x$ and $a^p(x) = (\small,t)$, then $y < t$ and $a^p(y) = (\small,u)$ for some $u$;
		\item if $x \leq_{\pi^p} y$ and $a^p(x) = (\large,t)$, then $y < t$ and $a^p(y) = (\large,u)$ for some $u$;
		\item if $x \leq_{\pi^p} y$ or $y \leq_{\pi^p} x$ and $a^p(x) = (\isolated,t)$, then $y < t$.
	\end{itemize}
	A condition $q$ \emph{extends} $p$, written $q \leq_{\mathbb{P}} p$, if $\pi^q$ extends $\pi^p$ and $a^q \supseteq a^p$.
\end{definition}

We define $x$ being \emph{$p$-small} and \emph{$p$-large} as for linear orders (and collectively call such elements \emph{$p$-non-isolated}), and say $x$ is \emph{$p$-isolated} if $a^p(x) = (\isolated,t)$ for some $t$. Obviously, any generic filter on $\mathbb{P}$ gives rise to a weakly stable partial order, which we denote by $G = (\omega,\leq_G)$. Going forward, we will refer to conditions in $\mathbb{P}$ explicitly as \emph{$\mathbb{P}$-conditions}, to avoid confusion with the notion $\mathbb{M}$ that we define below.

If $p$ is a $\mathbb{P}$-condition, we say a partial order $(P,\leq_P)$ (on $\omega$ or an initial segment of $\omega$) \emph{respects $p$} if $\leq_P$ extends $\pi^p$ and, for all $x < |\pi^p|$ and all $y \in P$, if $a^p(x) = (\small,t)$ and $y \geq t$ then $x \leq_P y$, if $a^p(x) = (\large,t)$ and $y \geq t$ then $y \leq_P x$, and if $a^p(x) = (\isolated,t)$ and $y \geq t$ then $x \mid_P y$.

We have the following analogues of Lemma \ref{lem:ext}, Proposition \ref{prop:nocomp}, and Proposition \ref{prop:infsubset} in the setting of partial orders.

\begin{lemma}\label{lem:partialext}
	Let $p$ be a $\mathbb{P}$-condition, and suppose $\pi \in \FinPO$ respects $p$.
	%Let $M$ be the set of all $z < |\lambda|$ for which there is no $p$-small $w$ with $z \leq_\lambda w$, and no $p$-large $w$ with $w \leq_\lambda z$.
	\begin{enumerate}
		\item There are $q_0,q_1,q_2 \leq_{\mathbb{P}} p$ such that $\pi^{q_i} = \pi$ and every $z$ with $|\pi^p| \leq z < |\pi|$ is $q_0$-small, $q_1$-isolated, and $q_2$-large.
		\item If $|\pi^p| \leq x,y < |\pi|$ and $x <_\pi y$, there are $r_0,r_1 \leq_{\mathbb{P}} p$ such that $\pi^{r_i} = \pi$, $x$ is $r_0$-small and $r_1$-isolated, and $y$ is $r_0$-isolated and $r_1$-large.
		\item If $|\pi^p| \leq x,y < |\pi|$ and $x$ and $y$ are $\leq_\pi$-incomparable, there are $s_0,s_1 \leq_{\mathbb{P}} p$ such that $\pi^{s_i} = \pi$, $x$ is $s_0$-small and $s_1$-isolated, and $y$ is $s_0$-isolated and $s_1$-large.
	\end{enumerate}
\end{lemma}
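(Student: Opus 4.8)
The plan is to follow the template of Lemma~\ref{lem:ext}: in each part I would write down the relevant assignment maps explicitly so that $\pi^{q} = \pi$ for the condition $q$ being built, let $q = (\pi, a^q)$, and then verify the three coherence clauses in the definition of a $\mathbb{P}$-condition by a case analysis on whether each element involved lies in $\omega \res |\pi^p|$ (``old'') or in $\{|\pi^p|, \ldots, |\pi| - 1\}$ (``new''). The one recurring observation, used exactly as in Lemma~\ref{lem:ext}, is that the hypothesis that $\pi$ respects $p$, together with antisymmetry of $\leq_\pi$, rules out the mixed old/new configurations: if $w < |\pi^p|$, if $z \geq |\pi^p|$, and if $a^p(w) = (\small, u)$, then $z \geq |\pi^p| \geq u$ forces $w \leq_\pi z$, so $z \leq_\pi w$ is impossible; symmetrically, $a^p(w) = (\large, u)$ forbids $w \leq_\pi z$, and $a^p(w) = (\isolated, u)$ forbids $w$ and $z$ from being $\leq_\pi$-comparable at all. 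Consequently, whenever a recorded-small (resp.\ recorded-large, resp.\ recorded-isolated) old element sits $\leq_\pi$-below (resp.\ $\leq_\pi$-above, resp.\ $\leq_\pi$-comparable to) some element, that element is again old, and the relevant clause reduces to the corresponding clause for~$p$.

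For part~1, for $i \in \{0,1,2\}$ I would set $a^{q_i}(z) = a^p(z)$ for $z < |\pi^p|$, and for each new $z$ set $a^{q_i}(z)$ equal to $(\small, |\pi|)$, $(\isolated, |\pi|)$, or $(\large, |\pi|)$ according as $i = 0$, $1$, or $2$; then put $q_i = (\pi, a^{q_i})$. Since all new elements receive the same status with threshold $|\pi|$, and every element of the field of $\pi$ is $< |\pi|$, every coherence clause whose distinguished element is new holds trivially, while the clauses whose distinguished element is old reduce to $p$ by the observation above. Hence each $q_i$ is a $\mathbb{P}$-condition with $q_i \leq_{\mathbb{P}} p$, and every $z$ with $|\pi^p| \leq z < |\pi|$ is $q_0$-small, $q_1$-isolated, and $q_2$-large, as required.

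For parts~2 and~3 I would use the \emph{same} two constructions, noting that the only property of the pair $x, y$ used in either part is that $y \not\leq_\pi x$ --- which follows from $x <_\pi y$ in part~2 and from $x \mid_\pi y$ in part~3. Define $r_0$ (and $s_0$) by keeping $a^p$ on old elements, assigning $(\small, |\pi|)$ to every new $z$ with $z \leq_\pi x$, and $(\isolated, |\pi|)$ to every other new $z$; define $r_1$ (and $s_1$) by assigning $(\large, |\pi|)$ to every new $z$ with $y \leq_\pi z$, and $(\isolated, |\pi|)$ to every other new $z$. Then $x$ is $r_0$-small (since $x \leq_\pi x$) and $y$ is $r_0$-isolated (since $y \not\leq_\pi x$), and dually $y$ is $r_1$-large and $x$ is $r_1$-isolated. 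Coherence of $r_0$ is checked as before: the small clause for a new $z$ uses $z \leq_\pi x$ plus transitivity to see that any $w \leq_\pi z$ satisfies $w \leq_\pi x$, hence $w$ is either new and recorded small or old and $p$-small (by the mixed-case observation), while $w < |\pi|$ is automatic; the isolated clause for a new $z$ is immediate from the threshold $|\pi|$; and every remaining clause reduces to $p$. Coherence of $r_1$ is symmetric, the large clause for a new $z$ now using $y \leq_\pi z$ and transitivity. The same verifications apply verbatim to $s_0$ and $s_1$.

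The main obstacle is purely the bookkeeping: confirming in every mixed old/new configuration that ``$\pi$ respects $p$'' genuinely supplies the needed contradiction, and keeping straight which threshold is being tested --- always $|\pi|$ for a new element, and the inherited $a^p$-threshold for an old one. Conceptually this is routine and parallels Lemma~\ref{lem:ext}; the only genuinely new ingredient is the refined small/isolated (resp.\ large/isolated) split used for parts~2 and~3, which is needed precisely because $x$ must be kept small (resp.\ isolated) while $y$ is made isolated (resp.\ large) even though $x$ and $y$ may be $\leq_\pi$-comparable.
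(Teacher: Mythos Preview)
Your proposal is correct and follows essentially the same template as the paper: define the assignment map on new elements with threshold $|\pi|$, and reduce the old-element clauses to those of $p$ via the ``$\pi$ respects $p$'' hypothesis. The one cosmetic difference is in the predicate used for $r_0$: the paper makes a new $z$ small iff $z \ngeq_\pi y$ (and isolated otherwise), whereas you make $z$ small iff $z \leq_\pi x$; both choices verify the coherence clauses for the same reasons, and your version has the minor advantage of making it transparent that parts~2 and~3 share a single construction, since only $y \not\leq_\pi x$ is used. For $r_1$ your definition coincides with the paper's.
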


\begin{proof}
	Part 1 is proved just like Lemma \ref{lem:ext}. For part 2, fix $i \in \{0,1\}$. Define $a_i : |\pi| \to \{\small,\large,\isolated\} \times (\omega \res |\pi| + 1)$ as follows. For all $z < |\pi|$, let $a_i(z) = a^p(z)$, and for all $z$ with $|\pi^p| \leq z < |\pi|$, let
	\[
		a_i(z) =
		\begin{cases}
			(\small,|\pi|) & \text{if } z \ngeq_\pi y,\\
			(\isolated,|\pi|) & \text{if } z \geq_\pi y,
		\end{cases}
	\]
	if $i = 0$, and
	\[
		a_i(z) =
		\begin{cases}
			(\isolated,|\pi|) & \text{if } z \ngeq_\pi y,\\
			(\large,|\pi|) & \text{if } z \geq_\pi y,
		\end{cases}
	\]
	if $i = 1$. Let $r_i = (\pi,a_i)$. The verification that this is a $\mathbb{P}$-condition is now just as in Lemma \ref{lem:ext}. Part 3 is proved analogously.
\end{proof}

\begin{proposition}\label{prop:partialnocomp}
	If $G = (\omega,\leq_G)$ is the weakly stable partial order given by a generic filter on $\mathbb{P}$, then $G$ has no $G$-computable infinite chain or antichain.
\end{proposition}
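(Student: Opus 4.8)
The plan is to mirror the proof of Proposition~\ref{prop:nocomp}, exploiting the isolated column in $\mathbb{P}$. The guiding observation is a dichotomy for a set $D = \Delta^G$ that is forced to be infinite: if some $x \in D$ is $q$-isolated for a $\mathbb{P}$-condition $q$ in the generic filter, then $x$ is $\leq_G$-incomparable with every element past its (finite) threshold, and since $D$ is infinite it contains such an element, so $D$ is not a chain; dually, if some $x \in D$ is $q$-small (or $q$-large), then $x$ lies $\leq_G$-below (respectively, above) cofinitely many elements, so $D$ again meets one of them and $D$ is not an antichain. Hence it suffices, below any condition forcing $\Delta^G$ total and infinite, to find an extension placing one element of $\Delta^G$ in the isolated column and another in the small column.

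I would carry this out by a density argument. Fix a functional $\Delta$ and a $\mathbb{P}$-condition $p$; the goal is to produce $q \leq_{\mathbb{P}} p$ forcing that $\Delta^G$ is neither an infinite chain nor an infinite antichain. If $p$ does not force $\Delta^G$ to be total and infinite, some $q \leq_{\mathbb{P}} p$ forces it to fail to be total or to be finite, and such a $q$ trivially works. Otherwise, in any generic $\leq_G$ extending $p$ the set $\Delta^G$ is infinite, so we may fix $\pi \in \FinPO$ that respects $p$ together with numbers $x_0, x_1$ satisfying $|\pi^p| \le x_0 < x_1 < |\pi|$ and $\Delta^\pi(x_0)\downarrow = \Delta^\pi(x_1)\downarrow = 1$, so that $x_0, x_1 \in \Delta^G$ whenever $\leq_G$ extends $\pi$. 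Now split on the $\leq_\pi$-relationship between $x_0$ and $x_1$: if they are $\leq_\pi$-comparable, apply part~2 of Lemma~\ref{lem:partialext} (with the roles of $x$ and $y$ chosen so that its hypothesis holds); if they are $\leq_\pi$-incomparable, apply part~3. In each case the lemma provides a $q \leq_{\mathbb{P}} p$ with $\pi^q = \pi$ in which one of $x_0, x_1$ is $q$-isolated and the other is $q$-small.

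To finish, I would verify that this $q$ is as desired. Any generic $\leq_G$ extending $q$ respects $q$, so by the dichotomy the $q$-isolated member of $\{x_0, x_1\}$, together with some element of $\Delta^G$ past its threshold, witnesses that $\Delta^G$ is not a chain, and the $q$-small member, together with some element of $\Delta^G$ past its threshold, witnesses that $\Delta^G$ is not an antichain; both threshold facts use only that $q \leq_{\mathbb{P}} p$ forces $\Delta^G$ to be infinite, hence to meet every final segment of $\omega$. Thus $q$ forces $\Delta^G$ to be neither an infinite chain nor an infinite antichain, the set of such $q$ is dense below $p$, and since $p$ was arbitrary and every $G$-computable set is $\Delta^G$ for some $\Delta$, the generic $G$ has no $G$-computable infinite chain or antichain.

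There is no serious obstacle beyond the bookkeeping already packaged in Lemma~\ref{lem:partialext}: the one point to be careful about is that the two witnessing elements can be assigned to the isolated and small columns simultaneously, whatever their $\leq_\pi$-comparability, and this is exactly what parts~2 and~3 of that lemma deliver.
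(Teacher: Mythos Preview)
Your proposal is correct and is essentially the approach the paper intends: the paper's proof is simply ``Just like Proposition~\ref{prop:nocomp}'', and your argument is a faithful unpacking of that analogy, with the minor variation that you invoke parts~2 and~3 of Lemma~\ref{lem:partialext} (to make one witness isolated and the other small simultaneously) rather than part~1 (make all new elements isolated when $x_0,x_1$ are $\leq_\pi$-comparable, or all small when they are $\leq_\pi$-incomparable, and let the pair $x_0,x_1$ itself kill the remaining case). Either route works and the underlying idea is the same.
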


\begin{proof}
	Just like Proposition \ref{prop:nocomp}.
\end{proof}

\begin{proposition}\label{prop:partialinfsubset}
	Let $\leq_P$ be a partial order and $X$ an infinite set.
	\begin{enumerate}
		\item If $\leq_P$ has no infinite chain contained in $X$, then it has an \mbox{$\leq_P \oplus X$}-computable infinite antichain contained in $X$.
		\item If $\leq_P$ has no infinite antichain contained in $X$, then it has an \mbox{$\le_P \oplus X$}-computable infinite chain contained in $X$.
	\end{enumerate}
\end{proposition}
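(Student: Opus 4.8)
The plan is to prove part~1; part~2 is entirely symmetric, interchanging the roles of ``chain'' and ``antichain.'' So assume $\leq_P$ has no infinite chain contained in $X$. I would build the desired antichain $Y = \{y_0 <_{\mathbb N} y_1 <_{\mathbb N} \cdots\}$ by an $(\leq_P \oplus X)$-computable induction, maintaining as invariant that $\{y_0,\dots,y_n\}$ is a $\leq_P$-antichain and, crucially, that the set $X_n = \{z \in X : z >_{\mathbb N} y_n \text{ and } z \mid_P y_i \text{ for all } i \leq n\}$ is infinite. The base case is $n = 0$: the key observation is that since $\leq_P$ has no infinite chain inside $X$, in particular $X$ itself is not a chain, so in fact for \emph{every} $w \in X$ there must be some $z \in X$ with $z \mid_P w$ --- otherwise $\{z \in X : z \leq_P w \text{ or } w \leq_P z\}$ would be all of $X$, and one checks (again using ``no infinite chain in $X$'') that this forces a contradiction; more carefully, I would just start the construction from the least element of $X$ together with a witness of incomparability, which exists precisely because $X$ is not itself a $\leq_P$-chain.

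For the inductive step, suppose $\{y_0,\dots,y_n\}$ and the infinite set $X_n$ have been constructed. The set $X_n$, being infinite, is not a $\leq_P$-chain (it is contained in $X$), so there exist $a, b \in X_n$ with $a \mid_P b$. I would like to extend the antichain by one of these and keep an infinite remainder. The point is to find, inside $X_n$, an element $y_{n+1}$ that is $\leq_P$-incomparable with infinitely many further elements of $X_n$. Here I would invoke the following pigeonhole-style dichotomy: consider the partition of the infinite set $X_n$ according to, for each pair, whether the two elements are $\leq_P$-comparable or not. Since $X_n$ contains no infinite chain, one cannot have an infinite subset all of whose pairs are comparable, so by Ramsey's theorem for pairs (or rather by the elementary fact that suffices here) there is an infinite subset of $X_n$ that is a $\leq_P$-antichain --- but I want this effectively. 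The honest route, which stays $(\leq_P \oplus X)$-computable, is: search $(\leq_P \oplus X)$-computably for the \emph{least} $y_{n+1} \in X_n$ (above $y_n$) such that $\{z \in X_n : z >_{\mathbb N} y_{n+1},\ z \mid_P y_{n+1}\}$ is infinite. Such a $y_{n+1}$ exists because if \emph{every} element of $X_n$ were comparable with all but finitely many later elements of $X_n$, then one could $\leq_P$-recursively thin $X_n$ to an infinite chain contained in $X$ (at each step, pass to the cofinitely-many elements comparable with the current point, and note they are then forced into a chain by a further comparability argument), contradicting the hypothesis; and ``is this set infinite'' is a $\Pi^0_1(\leq_P \oplus X)$ question, but the \emph{existence} of such a $y_{n+1}$ lets us find one by unbounded search as soon as we are willing to verify infinitude in the limit --- so to keep everything genuinely computable I would instead phrase the search as looking for the least $y_{n+1}$ together with a large finite set of incomparability witnesses and argue the construction never gets stuck, deferring the ``infinite'' bookkeeping to the invariant rather than to a halting search. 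Set $X_{n+1} = \{z \in X_n : z >_{\mathbb N} y_{n+1},\ z \mid_P y_{n+1}\}$; this is infinite by choice of $y_{n+1}$, it is contained in $X$, every element of it is $\mid_P$ each of $y_0,\dots,y_{n+1}$, and the invariant is restored.

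The resulting $Y = \{y_n : n \in \omega\}$ is infinite, is an antichain since any two $y_i, y_j$ with $i < j$ satisfy $y_j \in X_{j-1}$ and hence $y_j \mid_P y_i$, and is computed from $\leq_P \oplus X$ by construction. The main obstacle is precisely the effectivity of the search for $y_{n+1}$: the ``right'' element is characterized by an infinitude condition that is not decidable. I expect the clean fix is to observe that we do not need to \emph{decide} infinitude --- we only need that a suitable $y_{n+1}$ exists and that picking the $\leq_{\mathbb N}$-least element of $X_n$ admitting at least one later incomparability witness already works provided we simultaneously verify that the hypothesis ``no infinite chain in $X$'' rules out the bad case; i.e., one proves by contradiction that the naive greedy choice (least element of $X_n$ with an incomparable successor in $X_n$, then recurse on its incomparable successors) cannot fail to leave an infinite tail, since a failure would exhibit an infinite $\leq_P$-chain inside $X$. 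This is the one spot that genuinely uses the hypothesis, and writing it carefully is the crux; the rest is bookkeeping. (As promised, part~2 follows by the same argument with ``antichain'' and ``chain'' swapped, using that a set containing no infinite antichain cannot have all pairs incomparable on an infinite subset.)
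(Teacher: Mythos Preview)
The paper does not actually give a proof of this proposition; it says only ``Just like Proposition~\ref{prop:infsubset},'' and that earlier proposition is itself left to the reader. So there is no detailed argument in the paper to compare against. That said, your proposal has a genuine gap at exactly the spot you flag as ``the crux.''

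Your suggested procedure is: from the current infinite reservoir $X_n$, take $y_{n+1}$ to be the $\leq_{\mathbb N}$-least element of $X_n$ admitting at least one later incomparability witness in $X_n$, and set $X_{n+1}=\{z\in X_n: z>y_{n+1},\ z\mid_P y_{n+1}\}$. You assert that if $X_{n+1}$ turned out finite, one could extract an infinite $\leq_P$-chain in $X$. This is false. Take $X=\omega$ with the partial order in which $0\mid_P 1$, both $0$ and $1$ lie $\leq_P$-below every $n\geq 2$, and all $n\geq 2$ are pairwise $\leq_P$-incomparable. There is no infinite chain (every chain has length at most $2$), yet your rule selects $y_0=0$ (it has the incomparability witness $1$), and then $X_0=\{z>0:z\mid_P 0\}=\{1\}$ is finite. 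So the invariant fails immediately, and no infinite chain is produced by the failure. More generally, ``has at least one incomparability witness'' is far too weak to guarantee an infinite tail of incomparables: an element can be incomparable with exactly one other element and comparable with everything else.

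What is missing is a non-uniform ingredient. The intended argument (and the one that parallels the linear-order case) is to show that there is an $\leq_P\oplus X$-computable infinite $Y\subseteq X$ on which the \emph{naive} greedy antichain procedure (include the next element iff it is incomparable with everything included so far) never halts. One does not compute $Y$ uniformly; one proves it exists by showing that otherwise one could extract an infinite chain in $X$ (for instance, by repeatedly passing to an infinite set of elements above, or below, a single member of a finite blocking antichain, and observing this descent cannot continue forever without producing an infinite chain). The finitely many parameters specifying $Y$ are then absorbed non-uniformly into the index of the $\leq_P\oplus X$-computable antichain. Your write-up tries to do everything with one uniform procedure, and that is precisely what breaks.
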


\begin{proof}
	Just like Proposition \ref{prop:infsubset}.
\end{proof}

We now define an embellishment of $\mathbb{P}$ that will allow us to build chains and antichains for stable partial orders computable from $G$.

\begin{definition}\label{defn:Mforcing}
	Let $\mathbb{M}$ be the following notion of forcing.	 A \emph{condition} is a sequence $M$ consisting of a $\mathbb{P}$-condition $p^M$, a finite collection $R^M$ of Turing functionals, and for each $\Phi \in R^M$, a triple $(C^{M,\Phi},A^{M,\Phi},I^{M,\Phi})$ as follows:
	\begin{itemize}
		\item $p^M$ forces that $\Phi^G$ is a stable partial order with no $G$-computable infinite chain or antichain;
		\item $C^{M,\Phi}$ is a finite set, and $p^M$ forces that $C^{M,\Phi}$ is a chain in $\Phi^G$;
		\item $A^{M,\Phi}$ is a finite set, and $p^M$ forces that $A^{M,\Phi}$ is an antichain in $\Phi^G$;
		\item $I^{M,\Phi}$ is a computable infinite set;
		\item $C^{M,\Phi} < I^{M,\Phi}$, and $p^M$ forces that all $x \in C^{M,\Phi}$ are comparable under $\leq_{\Phi^G}$ with all $y \geq I^{M,\Phi}$;
		\item $A^{M,\Phi} < I^{M,\Phi}$, and $p^M$ forces that all $x \in A^{M,\Phi}$ are incomparable under $\leq_{\Phi^G}$ with all $y \geq I^{M,\Phi}$;
	\end{itemize}
	A condition $N$ \emph{extends} $M$, written $N \leq_{\mathbb{M}} M$, if $p^N \leq_{\mathbb{P}} p^M$, $R^N \supseteq R^M$, and for each $\Phi \in R^M$, $C^{M,\Phi} \subseteq C^{N,\Phi} \subseteq C^{M,\Phi} \cup I^{M,\Phi}$, $A^{M,\Phi} \subseteq A^{N,\Phi} \subseteq A^{M,\Phi} \cup I^{M,\Phi}$, and $I^{N,\Phi} \subseteq I^{M,\Phi}$.
\end{definition}

Thus, in an $\mathbb{M}$-condition $M$ with $\Phi \in R^M$, each of $(C^{M,\Phi},I^\Phi)$ and $(A^{M,\Phi},I^\Phi)$ is just a Mathias condition. (See, e.g., \cite{CDHS-2014} and \cite{CDS-TA} for background on Mathias forcing in computability theory.) A generic filter $\mathcal{F}_{\mathbb{M}}$ on $\mathbb{M}$ thus produces a weakly stable partial order $G = \bigcup_{M \in \mathcal{F}_{\mathbb{M}}} \pi^{p^M}$ which is generic for $\mathbb{P}$, a collection $R^G = \bigcup_{M \in \mathcal{F}_{\mathbb{M}}} R^M$ of Turing functionals such that $\Phi^G$ is stable partial order with no $G$-computable infinite chain or antichain, and for each $\Phi \in R^G$, a chain $C^{G,\Phi} = \bigcup_{M \in \mathcal{F}_{\mathbb{M}}} C^{M,\Phi}$ and antichain $A^{G,\Phi} = \bigcup_{M \in \mathcal{F}_{\mathbb{M}}} A^{M,\Phi}$. For the remainder of this section, let $\mathcal{F}_{\mathbb{M}}$ be fixed, and let the above generic objects be taken with respect to it. We also use $G$, $R^G$, $A^G$, and $C^G$ as names for these objects in the $\mathbb{M}$ forcing language.

%In what follows, when we define an extension $N$ of a given $\mathbb{M}$-condition $M$. First, we will only explicitly define those components of $N$ that differ from $M$, and say simply that the rest of $N$ agrees with $M$. Thus, if we do not explicitly define, e.g., $p^N$ or $C^{N,\Phi}$, we mean for these to be equal to $p^M$ and $C^{M,\Phi}$, respectively.

\subsection{Proof of the theorem}

The proof will follow by the following sequence of lemmas. Note that if $p$ is a $\mathbb{P}$-condition forcing that every element of a finite set $F$ is $\leq_{\Phi^G}$-non-isolated or $\leq_{\Phi^G}$-isolated, then there is an $m$ such that $p$ forces that every $x \in F$ is $\leq_{\Phi^G}$-comparable with every $y \geq m$ or $\leq_{\Phi^G}$-incomparable with every such $y$. In what follows, we denote the least such $m$ by $m_{p,F}$.

\begin{lemma}\label{lem:Gaddfunctional}
	For each Turing functional $\Phi$, if $\Phi^G$ is a stable coloring with no $G$-computable infinite chain or antichain, then $\Phi \in R^G$.
\end{lemma}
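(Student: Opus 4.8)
The plan is to run a standard genericity/density argument: I will show that for any fixed Turing functional $\Phi$, the set of $\mathbb{M}$-conditions $M$ such that either $\Phi \in R^M$, or $p^M$ forces that $\Phi^G$ is \emph{not} a stable partial order with no $G$-computable infinite chain or antichain, is dense in $\mathbb{M}$. Since $\mathcal{F}_{\mathbb{M}}$ is generic, it meets this dense set; and if the hypothesis of the lemma holds — i.e. $\Phi^G$ really is a stable partial order with no $G$-computable infinite chain or antichain — then the second alternative is impossible (the relevant statements about $G$ are decided correctly by genericity, via the usual forcing$=$truth facts for arithmetic forcing), so we must have landed in a condition $M$ with $\Phi \in R^M$, whence $\Phi \in R^G$.

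The main work is therefore the density claim. Fix an arbitrary $\mathbb{M}$-condition $M$ with $\Phi \notin R^M$; I want to find $N \leq_{\mathbb{M}} M$ with $\Phi \in R^N$ (or else $p^M$ already forces the failure of the defining property of $\Phi$, in which case $M$ itself is in the dense set and there is nothing to do). First I would extend $p^M$ to a $\mathbb{P}$-condition $p$ deciding whether $\Phi^G$ is a stable partial order and whether it has a $G$-computable infinite chain or antichain; if any of these is decided negatively we are done, so assume $p$ forces that $\Phi^G$ \emph{is} a stable partial order with no $G$-computable infinite chain or antichain. Now I must furnish the data $(C^{N,\Phi},A^{N,\Phi},I^{N,\Phi})$ required by Definition~\ref{defn:Mforcing}: take $C^{N,\Phi}=A^{N,\Phi}=\emptyset$, which are trivially forced to be a chain and an antichain in $\Phi^G$, and let $I^{N,\Phi}$ be any computable infinite set (e.g.\ $\omega$ itself, or $\omega \res m$ removed for an appropriate $m$ so the ``comparable/incomparable with all $y \ge I^{N,\Phi}$'' clauses hold vacuously since $C^{N,\Phi}$ and $A^{N,\Phi}$ are empty). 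Setting $p^N = p$, $R^N = R^M \cup \{\Phi\}$, and leaving the data for functionals already in $R^M$ unchanged (note $p \leq_{\mathbb{P}} p^M$ still forces everything those data required), one checks directly against Definition~\ref{defn:Mforcing} that $N$ is an $\mathbb{M}$-condition and $N \leq_{\mathbb{M}} M$. This proves density.

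The step I expect to be the only real point requiring care is the invocation of forcing$=$truth: one must know that the property ``$\Phi^G$ is a stable partial order with no $G$-computable infinite chain or antichain'' is (equivalent to) an arithmetical statement about $G$, so that it is decided by some condition in any generic filter, and that when the property genuinely holds of the generic $G$ it is the \emph{affirmative} instance that gets forced. Statedness and ``is a partial order'' are $\Pi^0_2$ in $G$; ``has no $G$-computable infinite chain or antichain'' is $\Pi^0_3$ in $G$ (for all $e$, $\Phi_e^G$ is not an infinite chain or antichain of $\Phi^G$) — in all cases arithmetical, so the machinery of Section~\ref{S:defn} (arithmetic genericity) applies verbatim. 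Everything else is routine bookkeeping against the definition of $\mathbb{M}$-condition and of the extension relation $\leq_{\mathbb{M}}$; the earlier analogues (Lemma~\ref{lem:partialext}, Proposition~\ref{prop:partialnocomp}) are not even needed here, only the fact that $\mathbb{P}$-conditions can be extended to decide arithmetical facts about $G$.
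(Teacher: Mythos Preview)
Your proposal is correct and follows essentially the same approach as the paper: both define $N$ by setting $R^N = R^M \cup \{\Phi\}$, $C^{N,\Phi} = A^{N,\Phi} = \emptyset$, $I^{N,\Phi} = \omega$, and leaving the rest unchanged. The paper is more terse, simply starting from an $M$ whose $p^M$ already forces the relevant property (implicitly invoking forcing${=}$truth to guarantee such an $M$ lies in the filter under the lemma's hypothesis), whereas you spell out the density and forcing${=}$truth steps explicitly; the extra extension of $p^M$ to decide the arithmetical statement is harmless but not actually needed.
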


\begin{proof}
	Let $M$ be any $\mathbb{M}$-condition such that $p^M$ forces that $\Phi^G$ is a stable partial order with no $G$-computable infinite chain or antichain. We define an $N \leq_{\mathbb{M}} M$ with $\Phi \in R^N$, which suffices, by genericity. If $\Phi \in R^M$, let $N = M$. Otherwise, let $R^N = R^M \cup \{\Phi\}$, $C^{N,\Phi} = A^{N,\Phi} = \emptyset$, and $I^{N,\Phi} = \omega$, and let the rest of the $N$ agree with $M$.
\end{proof}

\begin{lemma}\label{lem:infcompoents}
	For each $\Phi \in R^G$, each of $C^{G,\Phi}$ and $A^{G,\Phi}$ is infinite.
\end{lemma}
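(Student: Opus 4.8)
The plan is a density argument over $\mathbb{M}$. Fix $\Phi$; if $\Phi\in R^G$ then, since $R^G=\bigcup_{M\in\mathcal{F}_{\mathbb{M}}}R^M$, there is an $M_0\in\mathcal{F}_{\mathbb{M}}$ with $\Phi\in R^{M_0}$, so it suffices to prove that for every $n$ the sets $\{N : \Phi\in R^N,\ |C^{N,\Phi}|\ge n\}$ and $\{N : \Phi\in R^N,\ |A^{N,\Phi}|\ge n\}$ are dense below $M_0$. Iterating, this reduces to two tasks: given any $\mathbb{M}$-condition $M$ with $\Phi\in R^M$, produce $N\le_{\mathbb{M}}M$ with $C^{N,\Phi}\supsetneq C^{M,\Phi}$, and produce $N\le_{\mathbb{M}}M$ with $A^{N,\Phi}\supsetneq A^{M,\Phi}$. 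In each case $N$ will be built from $M$ by extending $p^M$ to some $p^N\le_{\mathbb{P}}p^M$, adjoining a single new element to $C^{M,\Phi}$ (resp.\ $A^{M,\Phi}$) drawn from the reservoir $I^{M,\Phi}$, and thinning $I^{M,\Phi}$ to a suitable infinite subreservoir $I^{N,\Phi}$; all remaining data of $M$ is carried over unchanged, which is legitimate because $p^N$ inherits everything $p^M$ forces, so the clauses of Definition~\ref{defn:Mforcing} pertaining to the other functionals in $R^M$ survive automatically.

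For the antichain, the key point is that $p^M$ forces $\Phi^G$ to have infinitely many isolated elements. Indeed, since $p^M$ forces $\Phi^G$ stable, it forces (in the ``small'' case) that every element of $\Phi^G$ has finitely many $\leq_{\Phi^G}$-predecessors; if only finitely many elements were isolated, then every sufficiently large element would be comparable with infinitely many elements, hence would have infinitely many $\leq_{\Phi^G}$-successors, and one could build a $\Phi^G$-computable, hence $G$-computable, infinite ascending chain by repeatedly passing to a larger successor — contradicting the defining clause of $p^M$ in Definition~\ref{defn:Mforcing}. (The ``large'' case is symmetric, working with successors and descending chains.) Using this, a genericity step produces $p^N\le_{\mathbb{P}}p^M$ and an element $a\in I^{M,\Phi}$ with $a>\max A^{M,\Phi}$ such that $p^N$ forces $a$ isolated with a decided threshold, i.e.\ $p^N$ forces $a\mid_{\Phi^G}y$ for all $y\ge m_{p^N,\{a\}}$; one then sets $A^{N,\Phi}=A^{M,\Phi}\cup\{a\}$ and takes $I^{N,\Phi}$ to be an appropriate infinite computable subset of $I^{M,\Phi}$ lying above $a$, $m_{p^N,\{a\}}$, and $\min I^{M,\Phi}$, which keeps both $(A^{N,\Phi},I^{N,\Phi})$ and $(C^{M,\Phi},I^{N,\Phi})$ valid Mathias components.

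For the chain, one instead invokes Proposition~\ref{prop:partialinfsubset}: since $p^M$ forces that $\Phi^G$ has no $G$-computable infinite antichain, it forces in particular that there is none contained in the set $I^{M,\Phi}$, and therefore, by the contrapositive of part (1) of that proposition, that $\Phi^G$ has an infinite $\leq_{\Phi^G}$-chain contained in $I^{M,\Phi}$. Taking the $\leq_{\Phi^G}$-least element $c$ of such a chain (in the ``small'' case; the $\leq_{\Phi^G}$-greatest in the ``large'' case) yields an element of $I^{M,\Phi}$ that is $\leq_{\Phi^G}$-comparable with infinitely many elements of $I^{M,\Phi}$; one then sets $C^{N,\Phi}=C^{M,\Phi}\cup\{c\}$, thins $I^{M,\Phi}$ to an infinite subreservoir of elements comparable with $c$ and lying above $\max C^{N,\Phi}$, and carries out a genericity step deciding the location of $c$ and the relevant uses so that ``forces'' can be replaced by the explicit comparability statements a condition requires.

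The main obstacle — in both cases, but especially for the chain — will be checking that the thinned reservoir $I^{N,\Phi}$ can be chosen both computable and genuinely compatible with \emph{every} element of $C^{N,\Phi}$ (resp.\ $A^{N,\Phi}$), not merely with the newly adjoined one, so that $N$ really is an $\mathbb{M}$-condition refining $M$; this is where one must carefully package the finitely many parameters and subconditions decided along the way. Once the two density facts are in hand, genericity of $\mathcal{F}_{\mathbb{M}}$ gives the lemma.
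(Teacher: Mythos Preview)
Your overall plan is the paper's plan: a density argument showing that below any $\mathbb{M}$-condition $M$ with $\Phi\in R^M$ one can extend to add a new element to each of $C^{M,\Phi}$ and $A^{M,\Phi}$. However, your execution is more circuitous than necessary, and the ``main obstacle'' you flag is in fact a non-issue once you set things up correctly.

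The paper handles both extensions in a single step and with parallel one-line contradiction arguments, entirely inside $I^{M,\Phi}$. For the chain: if every $y\in I^{M,\Phi}$ were forced $\leq_{\Phi^G}$-isolated by every $p\leq_{\mathbb{P}}p^M$, then $I^{M,\Phi}$ would consist of isolated elements and one could greedily build a $G$-computable antichain inside it, contradicting the defining clause of $M$. So some $x_0\in I^{M,\Phi}$ and some $p_0\leq_{\mathbb{P}}p^M$ force $x_0$ non-isolated. Symmetrically, if every $y\in I^{M,\Phi}$ were forced non-isolated below $p_0$, one could greedily build a $G$-computable chain; so some $x_1\in I^{M,\Phi}$ and $p_1\leq_{\mathbb{P}}p_0$ force $x_1$ isolated. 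Your detour through Proposition~\ref{prop:partialinfsubset} and the ``$\leq_{\Phi^G}$-least element of an infinite chain'' is unneeded: you only want a single non-isolated element of the reservoir, not a whole chain. Note also that your antichain argument establishes infinitely many isolated elements in $\omega$, which does not by itself yield one in $I^{M,\Phi}$; you must run that argument inside $I^{M,\Phi}$, as the paper does.

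Your flagged obstacle --- choosing $I^{N,\Phi}$ computable and compatible with \emph{every} element of $C^{N,\Phi}$, $A^{N,\Phi}$ --- dissolves once you notice that the clauses of Definition~\ref{defn:Mforcing} require comparability (resp.\ incomparability) with all $y\geq\min I^{N,\Phi}$, not merely with $y\in I^{N,\Phi}$. Since $p^N=p_1$ forces $x_0$ non-isolated and $x_1$ isolated, there is a fixed number $m=m_{p^N,\{x_0,x_1\}}$ such that $p^N$ forces $x_0$ comparable and $x_1$ incomparable with every $y\geq m$. Setting $I^{N,\Phi}=\{x\in I^{M,\Phi}:x>m\}$ is then automatically computable, lies above $C^{N,\Phi}\cup A^{N,\Phi}$, and since $\min I^{N,\Phi}\geq\min I^{M,\Phi}$ the old clauses for elements of $C^{M,\Phi}$ and $A^{M,\Phi}$ are inherited. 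There is no need to ``thin to elements comparable with $c$'' --- that phrasing is dangerous, since the set of $y$ comparable with $c$ in $\Phi^G$ depends on the generic, not on the condition.
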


\begin{proof}
	Let $M$ be any $\mathbb{M}$-condition with $\Phi \in R^M$. We show there is an $N \leq_{\mathbb{M}} M$ with $|C^{N,\Phi}| = |C^{M,\Phi}| + 1$ and $|A^{N,\Phi}| = |A^{M,\Phi}| + 1$. If, for every $y \in I^{M,\Phi}$, every $p \leq_{\mathbb{P}} p^M$ forced that $y$ is $\leq_{\Phi^G}$-isolated, then $I^{M,\Phi}$ would be a computable infinite set of elements all of which are $\leq_G$-isolated (for the actual generic $G$), so $\Phi^G$ would heve a $G$-computable infinite antichain contained in $I^{M,\Phi}$. But this cannot be, since $p$ forces that $\Phi^G$ has no $G$-computable infinite antichain. Hence, there must be an $x_0 \in I^{M,\Phi}$ and a $p_0 \leq_{\mathbb{P}} p$ forcing that $x_0$ is $\leq_{\Phi^G}$-non-isolated. Similarly, there must be an $x_1 \in I^{M,\Phi}$ and a $p_1 \leq_{\mathbb{P}} p_0$ forcing that $x_1$ is $\leq_{\Phi^G}$-isolated. Let $p^N = p_1$, $C^{N,\Phi} = C^{M,\Phi} \cup \{x_0\}$, and $A^{N,\Phi} = A^{M,\Phi} \cup \{x_1\}$. Let $I^{N,\Phi} = \{x \in I^{M,\Phi} : x > m_{p^N,\{x_0,x_1\}}\}$, and let the rest of $N$ agree with $M$. Now $N$ is the desired extension of $M$.
\end{proof}

The next lemma presents the key diagonalization step for our proof.

\begin{lemma}\label{lem:maindiag}
	Fix $\Phi \in R^G$ and Turing functionals $\Gamma$ and $\Delta$. If each of $\Gamma^{G \oplus C^{G,\Phi}}$ and $\Delta^{G \oplus A^{G,\Phi}}$ is total and defines a chain or antichain for $G$, then one of the two defines a finite set.
\end{lemma}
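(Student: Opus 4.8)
The plan is a genericity argument over $\mathbb{M}$. It suffices to prove that the set of $\mathbb{M}$-conditions forcing that one of $\Gamma^{G \oplus C^{G,\Phi}}$, $\Delta^{G \oplus A^{G,\Phi}}$ fails to be a total function defining an \emph{infinite} chain or antichain for $G$ is dense below every condition $M$ with $\Phi \in R^M$; granting the hypothesis of the lemma (both total, both defining chains or antichains for $G$), the only such failure that can then hold of the generic objects is that one of the two sets is finite, which is the conclusion. So fix $M$ with $\Phi \in R^M$ and write $p = p^M$, $C = C^{M,\Phi}$, $A = A^{M,\Phi}$, $I = I^{M,\Phi}$. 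If some extension of $M$ forces $\Gamma^{G\oplus C^{G,\Phi}}$ or $\Delta^{G\oplus A^{G,\Phi}}$ to be bounded, hence finite, we are done; so by a standard density argument we may assume that below $M$ we can always extend so as to enumerate a further element into either of the two sets, beyond any prescribed bound.

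Granting this, the aim is to force one of the two functionals --- say $\Delta$ --- to enumerate two elements $z_0 < z_1$ with $z_0$ forced $\le_G$-isolated and $z_1$ forced $\le_G$-non-isolated. No such set can be a chain or antichain for the weakly stable order $G$: an infinite $\le_G$-chain contains no $\le_G$-isolated element, and an infinite $\le_G$-antichain contains no $\le_G$-non-isolated element. To set these types I would first extend the partial-order part $\pi$ of a $\mathbb{P}$-condition far enough that (i) $\Delta$ converges on the desired output from the current finite antichain-information, and (ii) the finitely many $\le_{\Phi^G}$-incomparabilities witnessing that the current finite antichain-extension really is one have converged; then, since $\Phi$ and the functionals' oracle computations relative to $G$ read only the partial order $G = \bigcup \pi^{p}$ and never the small/large/isolated labels carried by $\mathbb{P}$-conditions, I can re-choose those labels for the newly added numbers using Lemma~\ref{lem:partialext}, making $z_0$ isolated and $z_1$ non-isolated while leaving everything already decided intact, and finally shrink the reservoir to keep a legitimate $\mathbb{M}$-condition.

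The subtlety, and the reason both functionals appear in the statement, is that this re-labelling is constrained: an $\mathbb{M}$-condition demands that every element of its finite chain be forced $\le_{\Phi^G}$-comparable with its whole reservoir tail, and every element of its finite antichain be forced $\le_{\Phi^G}$-incomparable with its tail --- requirements about how $G$ may still be grown that an arbitrary re-labelling can violate. This is resolved by using that $\Phi^G$ is forced to be merely \emph{stable}: extending $p$ we decide which of the two stable types it has, and then (via Lemma~\ref{lem:infcompoents} and its proof, so that the elements added to $C^{G,\Phi}$ all have one $\le_{\Phi^G}$-behavior and those added to $A^{G,\Phi}$ another) exactly one of the two directions --- $\Delta$ through $A^{G,\Phi}$ in one case, $\Gamma$ through $C^{G,\Phi}$ in the other --- leaves enough slack in the $\le_G$-structure for the re-labelling to go through, the point being that the third $\le_{\Phi^G}$-behavior is unavailable to $\Phi^G$ even though $G$ itself exhibits it. Proposition~\ref{prop:partialnocomp} underlies the initial dichotomy (it is what prevents the reservoirs from being exhausted), and Proposition~\ref{prop:partialinfsubset} supplies the auxiliary infinite chains or antichains inside reservoirs that are needed along the way. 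I expect the main obstacle to be exactly this reconciliation --- matching $\mathbb{P}$-level control of the $\le_G$-types of $\Gamma$'s and $\Delta$'s outputs against the chain/antichain-preservation obligations built into $\mathbb{M}$ --- together with reading off, from the stable type of $\Phi^G$, which of the two functionals can actually be defeated.
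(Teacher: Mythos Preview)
Your general framework is right: genericity over $\mathbb{M}$, re-labelling via Lemma~\ref{lem:partialext}, and the recognition that preserving the $\mathbb{M}$-constraints on the $\Phi^G$-types of the finite extensions is the crux. But your specific diagonalization differs from the paper's and leaves the crux unresolved.

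You propose to force a \emph{single} functional, say $\Delta^{G\oplus A^{G,\Phi}}$, to output two elements $z_0,z_1$ with conflicting $G$-types. The paper instead first cases on whether each of $\Gamma,\Delta$ is forced to produce a chain or an antichain for $G$, and then aims to place \emph{one} element of the wrong $G$-type into \emph{one} of the two outputs: a single $G$-isolated element already prevents an infinite chain, a single $G$-non-isolated element an infinite antichain. This is strictly less demanding than your two-element strategy.

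More importantly, the mechanism for surviving the re-labelling is not the one you sketch. The paper does not pre-select which side to attack from the stable type of $\Phi^G$. It simultaneously finds a chain-extension $F_0$ (forced $\Phi^G$-small) with output $x_0$ and an antichain-extension $F_1$ (forced $\Phi^G$-isolated) with output $x_1$, choosing $F_1$ above the stability threshold $m_{q_0,F_0}$ so that the $\Phi^G$-maximum of $F_0$ lies $\Phi^G$-below every element of $F_1$. After re-labelling the $G$-types of $x_0,x_1$ (which may disturb the $\Phi^G$-types of $F_0\cup F_1$) and passing to an extension re-deciding those $\Phi^G$-types, the key dichotomy is: either the $\Phi^G$-maximum of $F_0$ is still $\Phi^G$-small, in which case all of $F_0$ is and the chain side gives a valid $\mathbb{M}$-extension; or it is $\Phi^G$-isolated, which in the small-or-isolated regime forces every element of $F_1$ (lying $\Phi^G$-above it) to be $\Phi^G$-isolated as well, and the antichain side works. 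It is this $F_0$-below-$F_1$ order relation, not the mere absence of a third $\Phi^G$-type, that guarantees one side survives. Your proposal does not set up this relation, and without it there is no argument that the single side you choose retains the required $\Phi^G$-types after re-labelling.

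Two minor points: Propositions~\ref{prop:partialnocomp} and~\ref{prop:partialinfsubset} play no role in the paper's proof of this lemma; the non-existence of $G$-computable solutions to $\Phi^G$ is already built into the definition of an $\mathbb{M}$-condition, and no auxiliary infinite chains or antichains inside reservoirs are needed.
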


\begin{proof}
	Fix a condition $M$ such that $\Phi \in R^M$, and such that $M$ forces (in $\mathbb{M}$) that each of $\Gamma^{G \oplus C^{G,\Phi}}$ and $\Delta^{G \oplus A^{G,\Phi}}$ is total, and for each of the two, either that it defines a chain for $G$, or that it defines an antichain. We exhibit an $N \leq_{\mathbb{M}} M$ forcing that either $\Gamma^{G \oplus C^{G,\Phi}}$ or $\Delta^{G \oplus A^{G,\Phi}}$ defines a finite set, which gives the lemma.
	
	Since $\Phi \in R^M$, $p^M$ forces that $\Phi^G$ is a stable partial order. Assume that $p^M$ forces that every number is either $\leq_{\Phi^G}$-small or $\leq_{\Phi^G}$-isolated. The case where $p^M$ forces that every number is either $\leq_{\Phi^G}$-large or $\leq_{\Phi^G}$-isolated is symmetric. We consider the following cases.
	
	\begin{highlight}{Case 1:}
		there is a $p \leq_{\mathbb{P}} p^M$, an infinite computable subset $I$ of $I^{M,\Phi}$, and an $n \in \omega$ such that for all $q \leq_{\mathbb{P}} p$ and all finite sets $F \subseteq I$, if $q$ forces that $F$ is a chain for $\Phi^G$ and all its elements are $\leq_{\Phi^G}$-small, then there is no $x \geq n$ with $\Gamma^{\pi^q \oplus (C^{M,\Phi} \cup F)}(x) \downarrow = 1$. In this case, define $N$ as follows. Let $p^N = p$ and $I^{N,\Phi} = I$, and let the rest of $N$ agree with $M$. Then $N \leq_{\mathbb{M}} M$, and clearly $N$ forces that the set defined by $\Gamma^{G \oplus C^{G,\Phi}}$ contains no numbers $x \geq n$.
	\end{highlight}
	
	\begin{highlight}{Case 2:}
		there is a $p \leq_{\mathbb{P}} p^M$, an infinite computable subset $I$ of $I^{M,\Phi}$, and an $n \in \omega$ such that for all $q \leq_{\mathbb{P}} p^M$ and all finite sets $F \subseteq I$, if $q$ forces that $F$ is an antichain for $\Phi^G$ and all its elements are $\leq_{\Phi^G}$-isolated, then there is no $x \geq n$ with $\Delta^{\pi^q \oplus (A^{M,\Phi} \cup F)}(x) \downarrow = 1$. Define $N$ analogously to the way we did in Case 1.
	\end{highlight}
	
	\begin{highlight}{Case 3:}
		otherwise. We claim that this case cannot obtain, and to show this, break into the following subcases.
	\end{highlight}
	
	\begin{highlight}{Subcase a:}
		$M$ forces that $\Gamma^{G \oplus C^{G,\Phi}}$ and $\Delta^{G \oplus A^{G,\Phi}}$ are both chains for $G$. Since Case 1 does not hold, we can fix a $q_0 \leq_{\mathbb{P}} p^M$, a finite set $F_0 \subseteq I^{M,\Phi}$, and a number $x_0 \geq |\pi^{p^M}|$ such that $q_0$ forces that $F_0$ is a chain for $\Phi^G$ all of whose elements are $\leq_{\Phi^G}$-small, and $\Gamma^{\pi^{q_0} \oplus (C^{M,\Phi} \cup F_0)}(x_0) \downarrow = 1$. Now $\{x \in I^{M,\Phi} : x \geq m_{q_0,F_0}\}$ is computable, so since Case 2 does not hold, we can fix a $q_1 \leq_{\mathbb{P}} q_0$, a finite set $F_1 \subseteq \{x \in I^{M,\Phi} : x \geq m_{q_0,F_0}\}$, and a number $x_1 > x_0$ such that $q_1$ forces that $F_1$ is an antichain for $\Phi^G$ all of whose elements are $\leq_{\Phi^G}$-isolated, and $\Delta^{\pi^{q_1} \oplus (A^{M,\Phi} \cup F_1)}(x_1) \downarrow = 1$. By passing to an extension if necessary, we may assume $|\pi^{q_1}| > \max F_0 \cup F_1$.
		
		Since $q_1 \leq_{\mathbb{P}} p^M$, $\pi^{q_1}$ respects $p^M$. So by part 1 of Lemma \ref{lem:partialext}, we can choose $q \leq_{\mathbb{P}} p^M$ with $\pi^q = \pi^{q_1}$ such that every $z$ with $|\pi^{p^M}| \leq z < |\pi^{q_1}|$ is $q$-isolated. In particular, both $x_0$ and $x_1$ are $q$-isolated. By our use conventions, we have that $\Phi^{\pi^q}$ and $\Phi^{\pi^{q_1}}$ agree below $|\pi^{q_1}|$, so $q$ forces that the $\leq_{\Phi^G}$-largest element of $F_0$ is $\leq_{\Phi^G}$-below every element of $F_1$. Let $r \leq_{\mathbb{P}} q$ decide, for each element of $F_0 \cup F_1$, whether it is $\leq_{\Phi^G}$-small or $\leq_{\Phi^G}$-isolated. Then either $r$ forces that the $\leq_{\Phi^G}$-largest element of $F_0$ is $\leq_{\Phi^G}$-small, in which case all elements of $F_0$ are $\leq_{\Phi^G}$-small, or that the $\leq_{\Phi^G}$-largest element of $F_0$ is $\leq_{\Phi^G}$-isolated, in which case all elements of $F_1$ are $\leq_{\Phi^G}$-isolated.
		
		If $r$ forces that the elements of $F_0$ are all $\leq_{\Phi^G}$-small, define $N$ as follows. Let $p^N = r$, $C^{N,\Phi} = C^{M,\Phi} \cup F_0$, and $A^{N,\Phi} = A^{M,\Phi}$. Choose $m$ larger than $m_{r,F_0}$ and the use of $\Gamma^{\pi^r \oplus (C^{M,\Phi} \cup F_0)}(x_0)$, let $I^{N,\Phi} = \{x \in I^{M,\Phi} : x \geq m\}$, and let the rest of $N$ agree with $M$. Then $N \leq_{\mathbb{M}} M$, and $N$ forces that the set defined by $\Gamma^{G \oplus C^{G,\Phi}}$ contains $x_0$, and that $x_0$ is $\leq_G$-isolated. But this cannot be, since $M$ forces that $\Gamma^{G \oplus C^{G,\Phi}}$ defines a chain for $G$.
		
		If $r$ forces that the elements of $F_1$ are $\leq_{\Phi^G}$-isolated, we proceed similarly. Let $p^N = r$, $A^{N,\Phi} = A^{M,\Phi} \cup F_1$, and $C^{N,\Phi} = C^{M,\Phi}$. Choose $m$ larger than $m_{r,F_1}$ and the use of $\Delta^{\pi^r \oplus (A^{M,\Phi} \cup F_1)}(x_1)$, let $I^{N,\Phi} = \{x \in I^{M,\Phi} : x \geq m\}$, and let the rest of $N$ agree with $M$. Again, $N \leq_{\mathbb{M}} M$, and we have a contradiction because $N$ forces that the set defined by $\Delta^{G \oplus A^{G,\Phi}}$ contains $x_1$, which is $\leq_G$-isolated.
	\end{highlight}
	
	\begin{highlight}{Subcase b:}
		$M$ forces that $\Gamma^{G \oplus C^{G,\Phi}}$ and $\Delta^{G \oplus A^{G,\Phi}}$ are both antichains for $G$. The argument is analogous to the previous subcase.
	\end{highlight}
	
	\begin{highlight}{Subcase c:}
		$M$ forces that $\Gamma^{G \oplus C^{G,\Phi}}$ is a chain for $G$ and $\Delta^{G \oplus A^{G,\Phi}}$ an antichain. The argument is similar to the previous two subcases, but we must take slightly greater care in defining $N$. Fix $q_0 \leq_{\mathbb{P}} p^M$, $F_0 \subseteq I^{M,\Phi}$, and $x_0 \geq |\pi^{p^M}|$ as in Subcase a. Without loss of generality, $|\pi^{q_0}| > x_0$, so $x_0$ must be $q_0$-non-isolated since $M$ forces that $\Gamma^{G \oplus C^{G,\Phi}}$ is a chain for $G$. Say $a^{q_0}(x_0) = (\small,t)$; the case where $x_0$ is $q_0$-large is symmetric. By the failure of Case 2, fix a $q_1 \leq_{\mathbb{P}} q_0$, a finite set $F_1 \subseteq \{x \in I^{M,\Phi} : x \geq m_{q_0,F_0}\}$, and a number $x_1 > \max\{x_0,t\}$ such that $q_1$ forces that $F_1$ is an antichain for $\Phi^G$ all of whose elements are $\leq_{\Phi^G}$-isolated, and $\Delta^{\pi^{q_1} \oplus (A^{M,\Phi} \cup F_1)}(x_1) \downarrow = 1$. In particular, $x_0 <_{\pi^{q_1}} x_1$. We may assume $|\pi^{q_1}| > \max F_0 \cup F_1$.
		
		By part 2 of Lemma \ref{lem:partialext}, choose $q \leq_{\mathbb{P}} p^M$ with $\pi^q = \pi^{q_1}$ such that $x_0$ is $q$-isolated and $x_1$ is $q$-large. Let $r \leq_{\mathbb{P}} q$ decide, for each element of $F_0 \cup F_1$, whether it is $\leq_{\Phi^G}$-small or $\leq_{\Phi^G}$-isolated. Now as in Subcase a, either $r$ forces that all elements of $F_0$ are $\leq_{\Phi^G}$-small, or that all elements of $F_1$ are $\leq_{\Phi^G}$-isolated. In either case, we define $N$ as in Subcase a. If $r$ forces that all elements of $F_0$ are $\leq_{\Phi^G}$-small, we obtain a contradiction because $N$ forces that the set defined by $\Gamma^{G \oplus C^{G,\Phi}}$ contains $x_0$, which $\leq_G$-isolated, even though $M$ forces that $\Gamma^{G \oplus C^{G,\Phi}}$ is a chain for $G$. And if $r$ forces that all elements of $F_1$ are $\leq_{\Phi^G}$-isolated, we obtain a contradiction because $N$ forces that the set defined by $\Delta^{G \oplus A^{G,\Phi}}$ contains $x_1$, which is $\leq_G$-large, even though $M$ forces that $\Delta^{G \oplus A^{G,\Phi}}$ is an antichain for $G$.
		
	\end{highlight}
	
	\begin{highlight}{Subcase d:}
		$M$ forces that $\Gamma^{G \oplus C^{G,\Phi}}$ is an antichain for $G$ and $\Delta^{G \oplus A^{,\Phi}}$ a chain. The argument is analogous to Subcase c, except that when we obtain the $\mathbb{P}$-condition $q_1$ and the numbers $x_0$ and $x_1$, we will have that $x_0$ and $x_1$ are $\leq_{\pi^{q_1}}$-incomparable. Thus, to obtain $q$ as above we will appeal to part 3 of Lemma \ref{lem:partialext} instead of part 2. \qedhere
	\end{highlight}
\end{proof}

We are now ready to prove the theorem.

\begin{highlight}{Theorem~\ref*{thm:wscac}.}
	$\WSCAC \ncred \SCAC$.
\end{highlight}

\begin{proof}
	Let $G = (\omega,\leq_G)$ be the weakly stable partial order given by $\mathcal{F}_{\mathbb{M}}$. Consider any $G$-computable stable partial order, say $\Phi^G$. If this has a $G$-computable infinite chain or antichain, then such a chain or antichain, joined with $G$, can compute no infinite chain or antichain for $G$, by Proposition \ref{prop:partialnocomp}. So suppose $\Phi^G$ has no $G$-computable infinite chain or antichain. By Lemma \ref{lem:Gaddfunctional}, $\Phi \in R^G$, and by Lemma \ref{lem:infcompoents}, each of $C^{G,\Phi}$ and $A^{G,\Phi}$ is infinite, the former a chain for $\Phi^G$ and the latter an antichain. Suppose each of $C^{G,\Phi}$ and $A^{G,\Phi}$, joined with $G$, computes a chain or antichain for $G$, say via functionals $\Gamma$ and $\Delta$, respectively. Then by Lemma \ref{lem:maindiag}, one of $\Gamma^{G \oplus C^{G,\Phi}}$ and $\Delta^{G \oplus A^{G,\Phi}}$ defines a finite set. Thus, one of $C^{G,\Phi}$ or $A^{G,\Phi}$, even joined with $G$, cannot compute any infinite chain or antichain for $G$.
\end{proof}

%\bibliography{collaboration.bib}
%\bibliographystyle{amsalpha}
\bibliography{Papers.bib}
\bibliographystyle{plain}

\end{document}